\newcommand{\e}{\mathbb{E}}
\newcommand{\E}[1]{\mathbb{E}\left\{ #1\right\}}
\newcommand{\pk}[1]{\mathbb{P} \left\{ #1 \right \} }
\newcommand{\R}{\mathbb{R}}
\newcommand{\li}{\text{Li}_{\frac{1}{2}}}
\newcommand{\Z}{\mathbb{Z}}
\newcommand{\N}{\mathbb{N}}
\newcommand{\ep}{\varepsilon}
\newcommand{\eqd}{\overset{d}{=}}
\newcommand{\limit}[1]{\lim_{#1 \to   \infty}}
\newcommand{\ind}{\mathbb{I}}
\def\IF{\infty}
\def\bqny#1{\begin{eqnarray*} #1 \end{eqnarray*}}
\def\bqn#1{\begin{eqnarray} #1 \end{eqnarray}}
\newcommand{\kb}[1]{\boldsymbol{#1}}
\newcommand{\vk}[1]{\kb{#1}}
\DeclareMathOperator{\cov}{\mathbb{C}ov}
\DeclareMathOperator{\var}{\mathbb{V}ar}
\newtheorem{theorem}{Theorem}[section]
\newtheorem{proposition}[theorem]{Proposition}
\newtheorem{lemma}[theorem]{Lemma}
\newtheorem{corollary}[theorem]{Corollary}
\newtheorem{remark}[theorem]{Remark}
\begin{document}

\title{On the speed of convergence of discrete Pickands constants to continuous ones}

\author{Krzysztof Bisewski}
\address{Krzysztof Bisewski, Department of Actuarial Science, University of Lausanne,	UNIL-Dorigny, 1015 Lausanne, Switzerland}
\email{Krzysztof.Bisewski@unil.ch}

\author{Grigori Jasnovidov*}
\address{Grigori Jasnovidov*, Laboratory of Statistical Methods, St. Petersburg Department of Steklov Mathematical Institute of Russian Academy of Sciences,
}
\email{griga1995@yandex.ru*}

\bigskip

\date{\today}
 \maketitle

 {\bf Abstract:} In this manuscript, we address open questions raised by Dieker \& Yakir (2014), who proposed a novel method of estimation of (discrete) Pickands constants $\mathcal{H}^\delta_\alpha$ using a family of estimators $\xi^\delta_\alpha(T),T>0$, where $\alpha\in(0,2]$ is the Hurst parameter, and $\delta\geq0$ is the step-size of the regular discretization grid. We derive an upper bound for the discretization error $\mathcal{H}_\alpha^0 - \mathcal{H}_\alpha^\delta$, whose rate of convergence agrees with Conjecture 1 of Dieker \& Yakir (2014) in case $\alpha\in(0,1]$ and agrees up to logarithmic terms for $\alpha\in(1,2)$. Moreover, we show that all moments of $\xi_\alpha^\delta(T)$ are uniformly bounded and the bias of the estimator decays no slower than $\exp\{-\mathcal CT^{\alpha}\}$, as $T$ becomes large.\\

 {\bf Key Words:} fractional Brownian motion; Pickands constants; Monte Carlo simulation; discretization error
\\
\\
 {\bf AMS Classification:} 60G15; 60G70; 65C05

\section{Introduction}\label{s:introduction}
For any $\alpha\in(0,2]$ let $\{B_\alpha(t), t\in\R\}$ be a
fractional Brownian motion (later on, fBm) with Hurst parameter $H=\alpha/2$,
that is, $B_\alpha(t)$ is a centered Gaussian process with covariance function given by
\bqny{
\cov(B_\alpha(t),B_\alpha(s)) = \frac{|t|^{\alpha}+|s|^{\alpha}-|t-s|^{\alpha}}{2}, \quad t,s \in\R,~\alpha\in (0,2].
}
In this manuscript we consider the classical Pickands constant defined by
\bqn{\label{classical_pick_const_def}
\mathcal{H}_{\alpha} := \limit{S} \frac{1}{S}
\E{\sup_{t\in [0,S]}e^{\sqrt 2 B_\alpha(t)-t^{\alpha}}}\in (0,\infty), \quad \alpha\in (0,2].
}

The constant $\mathcal H_\alpha$ was first defined by Pickands
\cite{MR250367, MR250368} to describe the asymptotic behavior of the maximum of stationary Gaussian processes. Since then, Pickands constants played an important role in the theory of Gaussian processes, appearing in various asymptotic results related to the supremum; see monographs \citep{Pit96, 20lectures}.
In \cite{MR3351818}, it was recognized that discrete Pickands constant can be interpreted as an \textit{extremal index} of a Brown-Resnick process. This new realization motivated generalization of Pickands constants beyond the realm of Gaussian processes. We refer to \citep{SBK,MR3745388} for further references, who give an excellent account of the history of Pickands constants, their connection to the theory of max-stable processes and most recent advances in the theory.

While being omnipresent in the asymptotic theory of stochastic processes, to this date, the value of $\mathcal H_\alpha$ is known only in two very special cases: $\alpha=1$ and $\alpha=2$. In these cases,  the distribution of the supremum of process $B_\alpha$ is well-known --- $B_1$ is a standard Brownian motion, while $B_2$ is a straight line with random, normally distributed slope. When $\alpha\not\in\{1,2\}$, one may attempt to estimate the numerical value of $\mathcal H_\alpha$ from the definition \eqref{classical_pick_const_def} using Monte-Carlo methods. However, there is a number of problems associated with this approach:
\begin{enumerate}
\item[(i)] Firstly, Pickands constant $\mathcal H_\alpha$ in \eqref{classical_pick_const_def} is defined as a limit $S\to\infty$, so one must approximate it by choosing some (large) $S$. This results in bias in estimation, which we call the \textit{truncation error}. The truncation error was shown to decay faster than $S^{-p}$ for any $p<1$, \cite[Corollary~3.1]{MR2222683}.
\item[(ii)] Secondly, for every $\alpha\in(0,2)$, the variance of the truncated estimator blows up, as $S\to\infty$, i.e.
\[\lim_{S\to\infty}\var \left\{\frac{1}{S}
\sup_{t\in [0,S]}\exp\{\sqrt 2 B_\alpha(t)-t^{\alpha}\}\right\} = \infty.\]
This can be easily seen by considering the second moment of $\tfrac{1}{S}\exp\{\sqrt{2}B_\alpha(S)-S^\alpha\}$. This directly affects the \textit{sampling error} (standard deviation) of the Crude Monte Carlo estimator. As $S\to\infty$, one needs more and more samples to prevent its variance from blowing up.
\item[(iii)] Finally, there are no methods available for the exact simulation of $\sup_{t\in [0,S]}\exp\{\sqrt 2 B_\alpha(t)-t^{\alpha}\}$ for $\alpha\not\in\{1,2\}$. One must therefore resort to some method of approximation. Typically, one would simulate fBm on a regular $\delta$-grid, i.e. on the set $\delta\Z$ for $\delta>0$; cf. Eq.~\eqref{delta_pick_const_def} below. This approximation leads to a bias, which we call the \textit{discretization error}.
\end{enumerate}
In the following, for any fixed $\delta>0$ we define the discrete Pickands constant
\bqn{\label{delta_pick_const_def}
\mathcal{H}_{\alpha}^{\delta} := \limit{S} \frac{1}{S}
\E{\sup_{t\in [0,S]_\delta}e^{\sqrt 2 B_\alpha(t)-t^{\alpha}}
}, \quad \alpha\in(0,2],}
where for $a,b\in\R$ and $\delta>0$ $[a,b]_\delta = [a,b]\cap \delta\Z$. Additionally, we set $0\Z = \R$, so that $\mathcal H^0_\alpha = \mathcal H_\alpha$. In light of the discussion in item (iii) above, the discretization error equals to $\mathcal H_\alpha-\mathcal H_\alpha^\delta$. We should note that the quantity $\mathcal H_\alpha^\delta$ is well-defined and $\mathcal{H}_{\alpha}^{\delta}\in (0,\infty)$ for $\delta\ge0$. Moreover, $\mathcal{H}_{\alpha}^{\delta} \to \mathcal{H}_{\alpha}$, as $\delta\to0$,
which means that the discretization error diminishes,
as the size of the gap of the grid goes to $0$. We refer to \citep{SBK} for the proofs of these properties.

In recent years, \citep{MR3217455} proposed a new representation of $\mathcal H^\delta_\alpha$, which does not involve the limit operation. They show
\cite[Proposition~3]{MR3217455}, that for all $\delta\ge 0$, and $\alpha\in(0,2]$:
\bqn{\label{xi_alpha_delta}
\mathcal{H}_{\alpha}^\delta = \E{\xi_\alpha^\delta}, \quad \text{where} \quad \xi_\alpha^\delta := \frac{
\sup_{t\in\delta\Z}e^{\sqrt 2B_\alpha(t)-|t|^{\alpha}}}{
\delta\sum_{t\in \delta \Z}e^{\sqrt 2B_\alpha(t)-|t|^{\alpha} }},
}
where for $\delta = 0$ the denominator in the fraction above is substituted by $\int_{\R} e^{\sqrt 2B_\alpha(t)-|t|^{\alpha}}{\rm d}t$. In fact, the denominator can be substituted with $\eta\sum_{t\in \eta \Z}e^{\sqrt 2B_\alpha(t)-|t|^{\alpha}}$ for any $\eta$, which is an integer multiple of $\delta$, see \citep[Theorem~2]{SBK}. While one would ideally estimate $\mathcal H_\alpha$ using $\xi_\alpha^0$, it is unfortunately unfeasible due to the lack of exact simulation methods of $\xi^\delta_\alpha$ (see also item (iii) above). For that reason, the authors define the `truncated' version of random variable $\xi_\alpha^\delta$, namely
\begin{align*}
\xi_\alpha^\delta(T) := \frac{
\sup_{t\in[-T,T]_\delta}e^{\sqrt 2B_\alpha(t)-|t|^{\alpha}}}{
\delta\sum_{t\in[-T,T]_\delta}e^{\sqrt 2B_\alpha(t)-|t|^{\alpha} }},
\end{align*}
where for $\delta = 0$ the denominator in the fraction above is substituted by $\int_{-T}^T e^{\sqrt 2B_\alpha(t)-|t|^{\alpha}}{\rm d}t$. For any $\delta,T\in(0,\infty)$, the estimator $\xi_\alpha^\delta(T)$ is a functional of a fractional Brownian motion on a finite grid and, as such, it can be simulated exactly; see e.g. \cite{dieker2004simulation} for the survey of methods of simulation of fBm. The side effect of this approach is that the new estimator induces both the truncation and discretization errors described in items (i) and (iii) above.

In this manuscript we rigorously show that the estimator $\xi_\alpha^\delta(T)$ is well-suited for simulation. In Theorem~\ref{thm:main}, we address the conjecture about the asymptotic behavior of the discretization error between the continuous and discrete Pickands constant for a fixed $\alpha\in(0,2]$, which was stated by the inventors of the estimator $\xi^\delta_\alpha$, namely:
\begin{quote}
{\bf \cite[Conjecture~1]{MR3217455}} \quad For all $\alpha\in(0,2]$ it holds, that $\displaystyle \lim_{\delta\to 0}
\frac{\mathcal{H}_{\alpha}-\mathcal{H}_{\alpha}^{\delta}}{\delta^{\alpha/2}}
\in (0,\infty).$
\end{quote}
We establish that the conjecture is \emph{true} when $\alpha=1$ and it is \emph{not true} when $\alpha=2$; see Theorem~\ref{thm:main}(iii-iv) below, where the exact asymptotics of the discretization error are derived in these two special cases. Furthermore, in Theorem~\ref{thm:main}(i), we show that $\limsup_{\delta\to0}\delta^{-\alpha/2}(\mathcal{H}_{\alpha}-\mathcal{H}_{\alpha}^{\delta}) \in(0,\infty)$ for $\alpha\in(0,1)$ and in Theorem~\ref{thm:main}(ii) we show that $\mathcal H_\alpha-\mathcal H_\alpha^\delta$ is upper-bounded by $\delta^{\alpha/2}$ up to logarithmic terms for $\alpha\in(1,2)$ and all $\delta>0$ small enough. These results support the claim of the conjecture for all $\alpha\in(0,2)$.\\
Secondly, we consider the truncation and sampling errors induced by $\xi_\alpha^\delta(T)$. In Theorem~\ref{thm:tail_behavior_pickands} we derive a uniform upper bound for the tail of $\xi_\alpha^\delta$ which implies that all moments of $\xi_\alpha^\delta$ exist and are
uniformly bounded in $\delta\in[0,1]$ . In Theorem~\ref{theorem_truncation} we
establish that for any $\alpha\in(0,2)$ and $p\ge1$, the difference $|\e(\xi_\alpha^\delta(T))^p - \e(\xi_\alpha^\delta)^p|$ decays no slower than $\exp\{-\mathcal CT^\alpha\}$, as $T\to\infty$, uniformly for all $\delta\in[0,1]$. This implies that the truncation error of the Dieker-Yakir estimator decays no slower than $\exp\{-\mathcal CT^\alpha\}$ and together with Theorem~\ref{thm:tail_behavior_pickands} they imply that $\xi_\alpha^\delta(T)$ has a uniformly bounded sampling error, i.e.
\begin{equation}\label{eq:pth_moment_uniform_boundedness}
\sup_{(\delta,T)\in[0,1]\times[1,\infty)} \var\left\{ \xi_\alpha^\delta(T)\right\} < \infty.
\end{equation}

The manuscript is organized as follows. In Section \ref{s:main_results} we present our main results and discuss their extensions and relation to other problems. All the proofs are provided in Section \ref{s:proofs}.

\section{Main Results}\label{s:main_results}

In the following, we give an upper bound for
$\mathcal H_{\alpha} - \mathcal H_{\alpha}^{\delta}$ for all $\alpha\in(0,1)\cup(1,2)$ for small $\delta>0$. When $\alpha\in\{1,2\}$ we provide the exact asymptotics of the discretization error, as $\delta\to0$. Below, $\zeta$ is the
Euler-Riemann zeta function.
\begin{theorem}\label{thm:main}
It holds, that
\begin{itemize}
\item[(i)] for all $\alpha\in(0,1)$ there exists $\mathcal C>0$ such that for all $\delta>0$ sufficiently small,
\begin{equation*}
\mathcal H_{\alpha} - \mathcal H_{\alpha}^{\delta} \leq \mathcal C\delta^{\alpha/2};
\end{equation*}
\item[(ii)] for all $\alpha\in(1,2)$
there exists $\mathcal C>0$ such that for all
 $\delta>0$ sufficiently small,
\begin{equation*}
\mathcal H_{\alpha} - \mathcal H_{\alpha}^{\delta} \leq
\mathcal C\delta^{\alpha/2}|\log\delta|^{1/2};
\end{equation*}
\item[(iii)] $\displaystyle \lim_{\delta\to0}\frac{\mathcal H_1-\mathcal H_1^\delta}{\sqrt{\delta}} = -\frac{\zeta(1/2)}{\sqrt \pi}$;
\item[(iv)] $\displaystyle \lim_{\delta\to0}\frac{\mathcal H_2-\mathcal H_2^\delta}{\delta^2} = \frac{1}{12\sqrt{\pi}}$.
\end{itemize}
\end{theorem}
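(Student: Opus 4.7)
The plan is to use the Dieker-Yakir representation \eqref{xi_alpha_delta} throughout. Setting
\[
M := \sup_{t\in\R} e^{\sqrt 2 B_\alpha(t) - |t|^\alpha}, \quad M^\delta := \sup_{t\in\delta\Z} e^{\sqrt 2 B_\alpha(t) - |t|^\alpha}, \quad I := \int_\R e^{\sqrt 2 B_\alpha(t) - |t|^\alpha}\,dt, \quad I^\delta := \delta\sum_{t\in\delta\Z} e^{\sqrt 2 B_\alpha(t) - |t|^\alpha},
\]
so that on a common realization of $B_\alpha$ we have $\mathcal{H}_\alpha - \mathcal{H}_\alpha^\delta = \E{M/I - M^\delta/I^\delta}$, I would exploit the coupled identity
\[
\frac{M}{I} - \frac{M^\delta}{I^\delta} \;=\; \frac{M - M^\delta}{I} \;+\; \frac{M^\delta\,(I^\delta - I)}{I\cdot I^\delta},
\]
which splits the error into a sup-discretization contribution weighted by $\xi_\alpha^0$ and a relative Riemann-sum error weighted by $\xi_\alpha^\delta$. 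A preliminary truncation of $B_\alpha$ to $[-T,T]$ with $T\sim|\log\delta|^{1/\alpha}$ is harmless by Theorem \ref{theorem_truncation} and lets me assume that the argmax $t^*$ of $\sqrt 2 B_\alpha(t)-|t|^\alpha$ lies in a bounded window.

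For (i) and (ii) the first term is dominant. If $k^*\delta$ is the grid point closest to $t^*$ then $|t^*-k^*\delta|\le\delta/2$, and writing $\zeta:=\log M$, $\zeta^\delta:=\log M^\delta$ gives
\[
0 \;\le\; \zeta - \zeta^\delta \;\le\; \sqrt 2\bigl(B_\alpha(t^*) - B_\alpha(k^*\delta)\bigr) + O\bigl(\delta\,|t^*|^{\alpha-1}\bigr),
\]
hence $M-M^\delta = M(1-e^{-(\zeta-\zeta^\delta)})\le M(\zeta-\zeta^\delta)$ and $(M-M^\delta)/I\le \xi_\alpha^0(\zeta-\zeta^\delta)$. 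Since $\xi_\alpha^0$ has uniformly bounded moments by Theorem \ref{thm:tail_behavior_pickands}, Cauchy--Schwarz reduces the bound to controlling $\E{(\zeta-\zeta^\delta)^2}$. For $\alpha\in(0,1)$ the local Hölder increment $B_\alpha(t^*)-B_\alpha(k^*\delta)$ has typical order $\delta^{\alpha/2}$ and $\E{(\zeta-\zeta^\delta)^2}=O(\delta^\alpha)$, yielding the target $\delta^{\alpha/2}$. For $\alpha\in(1,2)$ the position of $t^*$ inside its window is not resolved at scale $\delta$, so I would dominate the increment by $\max_{k}|B_\alpha((k+1)\delta)-B_\alpha(k\delta)|$ over the logarithmic window of size $T$ and apply a Dudley/entropy bound, producing the extra $\sqrt{|\log\delta|}$ factor. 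The Riemann-sum term, handled similarly using the uniform moments of $\xi_\alpha^\delta$ together with the Hölder scaling of $e^{\sqrt 2 B_\alpha(t)-|t|^\alpha}$, is of strictly smaller order and is absorbed.

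For (iii)--(iv) I would proceed by explicit computation. In case (iv), $B_2(t)=tZ$ with $Z\sim N(0,1)$, hence $M=e^{Z^2/2}$ and $I=\sqrt\pi\,e^{Z^2/2}$; the Jacobi theta identity (Poisson summation) gives $I^\delta/I = 1 + O(e^{-\pi^2/\delta^2})$. Writing $r\in[-\delta/2,\delta/2]$ for the offset of $Z/\sqrt 2$ from the nearest point of $\delta\Z$ produces $M^\delta = e^{Z^2/2-r^2}$, and equidistribution makes $r/\delta$ asymptotically uniform on $[-1/2,1/2]$; the Taylor expansion $\E{e^{-r^2}}=1-\delta^2/12+O(\delta^4)$ then delivers the limit $1/(12\sqrt\pi)$. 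Case (iii) exploits that $\sqrt 2 B_1(t)-|t|$ decouples into two independent drifted Brownian motions on $[0,\infty)$ and $(-\infty,0]$, whose maximum and argmax admit explicit laws via the reflection principle. The scaling limit of $\delta^{-1/2}(M/I-M^\delta/I^\delta)$ reduces to a Gaussian-type lattice sum over grid points near $t^*$, whose regularized value I would identify as $-\zeta(1/2)/\sqrt\pi$ through Poisson summation and analytic continuation.

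The principal obstacle is the sharp analysis in (ii): controlling $B_\alpha(t^*) - B_\alpha(k^*\delta)$ uniformly in the random location $t^*$ without losing more than $\sqrt{|\log\delta|}$ requires a careful balance between the localization of $t^*$ and the Gaussian tail of the local modulus on grid-scale, and the crude chaining estimate is likely not tight. A secondary difficulty in (iii) is rigorously extracting the exact constant $-\zeta(1/2)/\sqrt\pi$ from the divergent series $\sum_{k\ge1}k^{-1/2}$ via its analytic continuation, which must be matched term-by-term with contributions from grid points near the maximizer.
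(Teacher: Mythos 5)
The critical gap is in part (i). Your whole argument hinges on the assertion that $\E{(\zeta-\zeta^\delta)^2}=O(\delta^\alpha)$, i.e.\ that the increment $B_\alpha(t^*)-B_\alpha(k^*\delta)$ at the \emph{random} argmax $t^*$ is of order $\delta^{\alpha/2}$ in $L^2$. This is precisely the heuristic the paper discusses (behaviour of fBm near its argmax by self-similarity) and explicitly states cannot currently be made rigorous for general fBm; stationarity of increments does not apply at a path-dependent random time, and the only off-the-shelf bound — dominating by the uniform modulus of continuity over the localization window — costs an extra $\sqrt{|\log\delta|}$, exactly as in your treatment of case (ii). So, as written, your method yields $\mathcal C\delta^{\alpha/2}|\log\delta|^{1/2}$ for all $\alpha\in(0,2)$ but not the sharp rate $\delta^{\alpha/2}$ claimed in (i), which is the main point of that part. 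The paper avoids the argmax altogether: it telescopes $\mathcal H_\alpha-\mathcal H_\alpha^\delta=\sum_k(\mathcal H_\alpha^{2^{-(k+1)}\delta}-\mathcal H_\alpha^{2^{-k}\delta})$, represents each dyadic difference as $\delta^{-1}\pk{\sup_{t\in\delta\Z\setminus\{0\}}Z_\alpha(t)<0,\ \sup_{t}Z_\alpha(t-\tfrac\delta2\mathrm{sgn}(t))+\eta<0}$ with $\eta$ an independent exponential (via a conditional limit theorem for stationary Gaussian processes), removes the half-grid shift by a Slepian comparison (valid only for $\alpha\in(0,1)$, which is why (i) and (ii) split), and then shows $\pk{\sup_{t\in\delta\Z\setminus\{0\}}Z_\alpha(t)+\eta<0}\le\mathcal C\delta^{1+\alpha/2}$ by conditioning on $(Z_\alpha(-\delta),Z_\alpha(\delta))$ and a Gaussian density estimate. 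None of these ingredients appear in your plan, and without a substitute for them the $\delta^{\alpha/2}$ bound in (i) is unproven.

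Part (iii) is also essentially a sketch: the step where the whole content lies — extracting $-\zeta(1/2)/\sqrt\pi$ from a ``regularized lattice sum'' by Poisson summation and analytic continuation — is not carried out, and the decoupling into two drifted Brownian motions with an argmax analysis is far from a computation. The paper instead starts from the known closed form $\mathcal H_1^\delta=\bigl(\delta\exp\{2\sum_{k\ge1}\Phi(-\sqrt{\delta k/2})/k\}\bigr)^{-1}$, differentiates it, applies L'H\^opital, and identifies the limit through the expansion of the polylogarithm $\mathrm{Li}_{1/2}(e^{-x^2})$, where $\zeta(1/2)$ appears explicitly; you would do well to use that formula rather than rebuild it. By contrast, your treatment of (ii) is sound in spirit and close to the paper's (which bounds $\mathcal H_\alpha-\mathcal H_\alpha^\delta$ by $2\,\E{\sup_{[0,1]}e^{Z_\alpha}-\sup_{[0,1]_\delta}e^{Z_\alpha}}$ and invokes the $L^2$ modulus-of-continuity bound $\mathcal C\delta^{\alpha/2}|\log\delta|^{1/2}$), and your (iv), via $B_2(t)=tZ$, Poisson summation for $I^\delta/I$, and equidistribution of the offset $r/\delta$, is a valid alternative to the paper's two-line Taylor expansion of the explicit formula $\mathcal H_2^\delta=\tfrac2\delta(\Phi(\delta/\sqrt2)-\tfrac12)$, though it amounts to re-deriving that formula.
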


The exact asymptotic behavior of the discretization error, as $\delta\to0$ could be derived for $\alpha\in\{1,2\}$ because, in these cases, the explicit formulas for $\mathcal H_\alpha^\delta$
are known. For convenience, we collect these results in Proposition~\ref{prop:formulas_alpha12} below. In the following, $\Phi$ is the cumulative distribution function of a standard Gaussian random variable.
\begin{proposition}\label{prop:formulas_alpha12}
It holds, that
\begin{itemize}
\item[(i)] $\mathcal H_1 = 1$, \ and \ $\displaystyle \mathcal{H}^{\delta}_1 =
\bigg(\delta\exp\Big\{2\sum_{k=1}^\infty\frac{\Phi(-\sqrt {\delta k/2})}{k}\Big\}\bigg)^{-1}$ for all $\delta>0$;
\item[(ii)] $\mathcal{H}_{2}=\frac{1}{\sqrt{\pi}}$, \ and \ $\displaystyle \mathcal{H}^\delta_{2}=\frac{2}{\delta}\left( \Phi(\delta/\sqrt{2})-\frac{1}{2}   \right)$ for all $\delta>0$.
\end{itemize}
\end{proposition}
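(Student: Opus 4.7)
The plan is to exploit the special distributional structure of $B_\alpha$ at $\alpha \in \{1,2\}$ and turn \eqref{delta_pick_const_def} into an explicit computation.

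For part (ii), I would use $B_2(t) \eqd tZ$ with $Z \sim \mathcal N(0,1)$, which turns $\sqrt 2 B_2(t) - t^2$ into the (random) concave parabola $-(t - Z/\sqrt 2)^2 + Z^2/2$ of deterministic shape. For $\mathcal H_2$, I would split $\e\{\sup_{t \in [0,S]} e^{\sqrt 2 B_2(t) - t^2}\}$ according to whether the vertex $Z/\sqrt 2$ lies below $0$, inside $[0,S]$, or above $S$; on the middle event the integrand $e^{z^2/2}\phi(z)$ is constant in $z$ and the two outer events reduce to Gaussian half-integrals, yielding $S/\sqrt\pi + 1$. For the grid version, on $\delta\Z\cap[0,S]$ the argmax is $k^\star := \mathrm{round}(Z/(\delta\sqrt 2))$ whenever it falls in $\{1,\ldots,N-1\}$ with $N := S/\delta$. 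Decomposing $\e\{\sup_{t\in[0,S]_\delta}\cdot\}$ by $k^\star$ and substituting $z = k\delta\sqrt 2 + w$ in each piece, the density $e^{\sqrt 2 k\delta z - k^2\delta^2}\phi(z)$ collapses to $\phi(w)$; hence each interior index contributes $\int_{-\delta/\sqrt 2}^{\delta/\sqrt 2}\phi(w)\,dw = 2\Phi(\delta/\sqrt 2) - 1$ and each of the two boundary indices contributes $\Phi(\delta/\sqrt 2)$. Summing and dividing by $S = N\delta$ then produces the claimed formula.

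For part (i), I discretize: let $Y_k := \sqrt 2 B_1(k\delta) - k\delta$, a Gaussian random walk with iid increments of law $\mathcal N(-\delta, 2\delta)$, and apply Spitzer's identity to its running maximum $M_n := \max_{0 \le k \le n} Y_k$,
\[
\sum_{n=0}^\infty z^n \e\{e^{M_n}\} = \exp\!\left(\sum_{k=1}^\infty \frac{z^k}{k}\e\{e^{Y_k^+}\}\right), \qquad |z| < 1.
\]
A direct Gaussian computation (complete the square in $\e\{e^{Y_k}\ind\{Y_k > 0\}\}$, combined with $\PP\{Y_k \le 0\} = \Phi(\sqrt{k\delta/2})$) gives $\e\{e^{Y_k^+}\} = 2\Phi(\sqrt{k\delta/2}) = 2 - 2\Phi(-\sqrt{k\delta/2})$. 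Pulling out $2\sum_k z^k/k = -2\log(1-z)$ then reduces the right-hand side to $(1-z)^{-2}\exp(-2\sum_{k \ge 1}z^k\Phi(-\sqrt{k\delta/2})/k)$. Since $\Phi(-\sqrt{k\delta/2})$ has Gaussian tails in $k$, the exponent converges absolutely at $z = 1$; combined with the monotonicity of $N \mapsto \e\{e^{M_N}\}$, Karamata's Tauberian theorem yields $\e\{e^{M_N}\} \sim C_\delta N$ with $C_\delta := \exp(-2\sum_{k \ge 1}\Phi(-\sqrt{k\delta/2})/k)$, and dividing by $N\delta$ gives the stated formula for $\mathcal H_1^\delta$.

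Finally, $\mathcal H_1 = 1$ is classical (see e.g.\ \cite{Pit96}) and can be re-derived via the reflection principle for Brownian motion with drift: integrating the explicit tail of $\sup_{t\in[0,S]}(\sqrt 2 B_1(t)-t)$ against $e^x dx$ yields $\e\{\sup_{t\in[0,S]} e^{\sqrt 2 B_1(t) - t}\} = (S+2)\Phi(\sqrt{S/2}) + \sqrt{2S}\,\phi(\sqrt{S/2}) \sim S$. I anticipate that the main technical point will be the Spitzer-Tauberian step for $\alpha = 1$: while both ingredients are textbook, identifying the perturbative factor as slowly varying at $z=1$ and combining it with the monotonicity of $\e\{e^{M_N}\}$ to legitimize the passage from generating-function asymptotics back to the pointwise statement is where most of the care in the argument will reside.
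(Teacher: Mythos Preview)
Your derivations are correct. For $\alpha=2$ the parabola decomposition and the substitution $z=k\delta\sqrt{2}+w$ do collapse each interior grid contribution to $2\Phi(\delta/\sqrt{2})-1$, and the two boundary terms to $\Phi(\delta/\sqrt{2})$, giving the stated formula after dividing by $N\delta$ and letting $N\to\infty$. For $\alpha=1$ the Spitzer identity you quote is the right tool; the computation $\E\{e^{Y_k^+}\}=2\Phi(\sqrt{k\delta/2})$ is correct, and since $\Phi(-\sqrt{k\delta/2})$ decays like a Gaussian the factor $g(z)=\exp\{-2\sum_{k\ge1}z^k\Phi(-\sqrt{k\delta/2})/k\}$ is continuous on $[0,1]$ with $g(1)=C_\delta$, so the Tauberian step (with the monotonicity of $n\mapsto\E\{e^{M_n}\}$) is legitimate and yields $\E\{e^{M_N}\}\sim C_\delta N$.

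That said, this is a genuinely different route from the paper: the paper does not derive these formulas at all but simply cites \cite{MR3379923,MR3217426} for part~(i) and \cite[Eq.~(2.9)]{MR3745388} for part~(ii), treating the proposition as a collection of known facts. Your approach is self-contained and essentially reconstructs the computations behind those references; in particular, the Spitzer--Tauberian argument is precisely the method underlying \cite{MR3217426} for the discrete Pickands constant of the Gaussian random walk. What the paper's route buys is brevity (the proposition is auxiliary here); what yours buys is that the reader sees \emph{why} the formulas hold, and in fact your computation for $\alpha=2$ gives the exact finite-$S$ value $\E\{\sup_{t\in[0,S]_\delta}e^{Z_2(t)}\}=N(2\Phi(\delta/\sqrt{2})-1)+1$, which is slightly stronger than just the limit.
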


In the following two results we establish an upper bound for the complementary cdf of $\xi_\alpha^\delta$ and for the truncation error discussed in item (i) in Section \ref{s:introduction}. These two results combined imply that the sampling error of $\xi_\alpha^\delta(T)$ is uniformly bounded in $(\delta,T)\in[0,1]\times[1,\infty)$, cf. Eq.~\eqref{eq:pth_moment_uniform_boundedness}.

\begin{theorem}\label{thm:tail_behavior_pickands}
For any $\alpha \in (0,2)$ there exist positive constants
$\mathcal C_1,\mathcal C_2$ such that
\bqny{
\pk{\xi_\alpha^\delta>x}\le \mathcal C_1e^{-\mathcal C_2\log ^2 x}
}
for all $x\ge 1,\delta\in[0,1]$.
\end{theorem}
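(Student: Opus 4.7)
\emph{Proof proposal.}
The plan is to bound $\xi_\alpha^\delta$ deterministically by a quantity measuring the ``effective width'' of its numerator, and then to control that quantity via Gaussian concentration for the underlying fBm. Throughout, write $X(t) := \sqrt{2}B_\alpha(t) - |t|^\alpha$ and let $t^*$ denote any maximizer of $X$ on $\delta\Z$ (or on $\R$ when $\delta = 0$); this exists a.s.\ because $X(t) \to -\infty$ as $|t|\to\infty$ by the strong law for fBm. Restricting the denominator in \eqref{xi_alpha_delta} to the super-level set $L := \{t : X(t) \geq X(t^*) - 1\}$ gives
\begin{equation*}
\xi_\alpha^\delta \;\leq\; \frac{e}{\mu_\delta(L)}, \qquad \mu_\delta(L) := \begin{cases} \delta \cdot \#(L\cap\delta\Z), & \delta > 0,\\ |L|\text{ (Lebesgue)}, & \delta = 0. \end{cases}
\end{equation*}
Consequently $\{\xi_\alpha^\delta > x\} \subseteq \{\mu_\delta(L) < e/x\}$. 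A short pigeonhole on the grid (trivial for $\delta = 0$ by continuity of $X$) shows that if $\mu_\delta(L) < e/x$ then there exists $h\in[-e/x, e/x]$ with $X(t^* + h) < X(t^*) - 1$; for $\delta \geq e/x$ the event $\{\xi_\alpha^\delta > x\}$ is already empty since $\mu_\delta(L) \geq \delta$.

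The next step is to localize $t^*$. A dyadic shell decomposition of $\{|t| \geq R\}$ combined with the Borell--TIS inequality applied to $\sqrt{2}B_\alpha$ on each shell $\{|t|\in[2^k R, 2^{k+1}R]\}$, where for $R$ large the drift $|t|^\alpha \geq (2^k R)^\alpha$ dominates the Gaussian fluctuation of order $(2^k R)^{\alpha/2}$, yields uniformly in $\delta\in[0,1]$
\begin{equation*}
\PP(|t^*| > R) \;\leq\; \PP\Bigl(\sup_{|t|\geq R} X(t) > 0\Bigr) \;\leq\; C e^{-c R^\alpha}.
\end{equation*}
The choice $R = R(x) := (c^{-1}\log^2 x)^{1/\alpha}$ makes the right-hand side at most $e^{-\log^2 x}$.

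On $\{|t^*| \leq R\}$ the drop property above forces $\sup_{|s|\leq R,\,|h|\leq e/x}(X(s) - X(s+h)) \geq 1$. The deterministic contribution coming from $|s+h|^\alpha - |s|^\alpha$ is bounded by $C_R (e/x)^{\min(\alpha,1)}$, which is below $1/2$ for $x$ large, so it suffices to bound
\begin{equation*}
\PP\Bigl(\sup_{|s|\leq R,\,|h|\leq e/x} |B_\alpha(s+h) - B_\alpha(s)| \geq \tfrac{1}{2\sqrt{2}}\Bigr).
\end{equation*}
Dudley's entropy bound applied to the fBm increment process on $[-R,R]\times[-e/x, e/x]$ yields expected supremum of order $x^{-\alpha/2}\sqrt{\log x}$, which vanishes as $x\to\infty$ for our choice of $R$; Borell--TIS with pointwise variance $(e/x)^\alpha$ then bounds the displayed probability by $C e^{-c' x^\alpha}$. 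Assembling,
\begin{equation*}
\PP(\xi_\alpha^\delta > x) \;\leq\; C e^{-\log^2 x} + C e^{-c' x^\alpha} \;\leq\; \mathcal C_1 e^{-\mathcal C_2 \log^2 x},
\end{equation*}
for all $x$ sufficiently large (and all $x\geq 1$ after adjusting $\mathcal C_1$), using that $x^\alpha$ eventually dominates $\log^2 x$ as $\alpha > 0$.

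The main technical balance is the tuning of the localization radius $R(x)$ against the chaining estimate: the Dudley entropy integral over $[-R,R]$ grows only like $\sqrt{\log R}$, so it is dominated by the Borell--TIS Gaussian factor of order $e^{-c/(e/x)^\alpha}$, and the choice $R \sim (\log x)^{2/\alpha}$ is precisely tuned so that both error contributions are of order $e^{-\mathcal C_2 \log^2 x}$. Uniformity in $\delta\in[0,1]$ is automatic: all the estimates above are proved for the continuous fBm $B_\alpha$, and the passage to the grid $\delta\Z$ only restricts the argmax and shrinks the super-level set.
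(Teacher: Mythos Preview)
Your argument is correct and takes a genuinely different route from the paper's. Both proofs share the localization step: you bound $\PP(|t^*|>R)\le Ce^{-cR^\alpha}$ via Borell--TIS on dyadic shells, which is exactly the paper's bound on $p_2(T)$. The divergence is in the ``local'' part. The paper bounds the ratio on $[-T,T]$ by chopping into unit blocks $[k,k+1]$ and using the elementary pigeonhole $\{\sum a_k/\sum b_k>x\}\subset\bigcup\{a_k/b_k>x\}$; each block ratio is then controlled by the oscillation $\sup_{[k,k+1]}Z_\alpha-\inf_{[k,k+1]}Z_\alpha$, which reduces to a one--dimensional Borell--TIS bound on $\sup_{[0,1]}B_\alpha$. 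This is entirely elementary (no chaining), but forces a drift term of order $T^{\alpha-1}$ into the threshold, which in turn requires tuning $T$ differently for $\alpha\le 1$ (take $T=x$) and $\alpha>1$ (take $T\asymp(\log x)^{1/(\alpha-1)}$). By contrast, your super--level--set reduction $\xi_\alpha^\delta\le e/\mu_\delta(L)$ converts the problem directly into a uniform modulus--of--continuity estimate for $B_\alpha$ on $[-R,R]$ at scale $e/x$; the Dudley bound $O(x^{-\alpha/2}\sqrt{\log x})$ for the expected modulus then feeds into Borell--TIS with variance $(e/x)^\alpha$ to give a local contribution $e^{-c'x^\alpha}$, far stronger than needed and uniform across $\alpha\in(0,2)$. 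Your drift handling $||s+h|^\alpha-|s|^\alpha|\le C_R(e/x)^{\min(\alpha,1)}$ with $R\asymp(\log x)^{2/\alpha}$ is the only place $\alpha$ enters, and the bound is $o(1)$ in all cases, so no case split is needed. In short: the paper's proof is more elementary but piecewise in $\alpha$; yours is conceptually cleaner and unified, at the cost of invoking Dudley's entropy integral.
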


Evidently, Theorem~\ref{thm:tail_behavior_pickands} implies that all moments of
$\xi_\alpha^\delta$ are finite
and uniformly bounded in $\delta\in [0,1]$ for any fixed $\alpha \in (0,2)$.

\begin{theorem}\label{theorem_truncation}
For any $\alpha\in (0,2)$ and $p>0$ there exist postive constants $\mathcal C_1,\mathcal C_2$ such that
\bqny{
\left|\E{(\xi_\alpha^\delta(T))^p} -
\E{(\xi_\alpha^\delta)^p} \right| \le \mathcal C_1e^{-\mathcal C_2T^\alpha}
}
for all $(\delta,T)\in[0,1]\times[1,\infty)$.
\end{theorem}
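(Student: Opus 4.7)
The plan is to bound the difference $|\mathbb{E}[(\xi_\alpha^\delta(T))^p]-\mathbb{E}[(\xi_\alpha^\delta)^p]|$ by decomposing the expectation according to a ``good event'' $G_T$ on which the contribution of the grid points with $|t|>T$ to both the numerator and denominator of $\xi_\alpha^\delta$ is exponentially small. On $G_T$ one shows directly that the two estimators are close, and on the complement one uses Cauchy--Schwarz together with Theorem~\ref{thm:tail_behavior_pickands}. Throughout I write $Y_t:=\sqrt{2} B_\alpha(t)-|t|^\alpha$, $D:=\delta\sum_{t\in\delta\mathbb{Z}}e^{Y_t}$, $D_T:=\delta\sum_{t\in[-T,T]_\delta}e^{Y_t}$, and let $N,N(T)$ denote the corresponding numerators, so that $\xi_\alpha^\delta=N/D$ and $\xi_\alpha^\delta(T)=N(T)/D_T$.

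Fix a small $\kappa>0$ and set
\[
G_T := \left\{\sup_{t \in \delta\mathbb{Z},\, |t|>T}\bigl(\sqrt{2} B_\alpha(t) - |t|^\alpha/2\bigr) \le -\kappa T^\alpha\right\}.
\]
Since $Y_0=0>-\kappa T^\alpha$, on $G_T$ the argmax of $Y$ over $\delta\mathbb{Z}$ lies in $[-T,T]$ and hence $N=N(T)$. The bound $Y_t\le-|t|^\alpha/2-\kappa T^\alpha$ for $|t|>T$ combined with a Riemann-sum comparison against $\int_T^\infty e^{-u^\alpha/2}du$ gives $R_T:=D-D_T\le \mathcal{C}\,e^{-\mathcal{c} T^\alpha}$ uniformly in $\delta\in[0,1]$ and $T\ge 1$. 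To estimate $\mathbb{P}(G_T^c)$ one decomposes $\{|t|>T\}$ into dyadic shells $\{2^kT<|t|\le 2^{k+1}T\}$, applies the Borell--TIS inequality on each shell (reducing to the law of $\sup_{t\in[1,2]}B_\alpha(t)$ via self-similarity of $B_\alpha$), and sums over $k$ to obtain $\mathbb{P}(G_T^c)\le\mathcal{C}_1 e^{-\mathcal{C}_2 T^\alpha}$.

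On $G_T$, combining $N=N(T)$ with the elementary inequality $D_T^{-p}-D^{-p}\le pR_T D_T^{-p-1}$ (valid for all $p>0$, since $1-(1+x)^{-p}\le px$ for $x\ge 0$) yields
\[
(\xi_\alpha^\delta(T))^p-(\xi_\alpha^\delta)^p \le p\,(\xi_\alpha^\delta(T))^p\,R_T/D_T.
\]
The main obstacle is that $D_T$ has no deterministic lower bound uniform in $\delta\in[0,1]$. To handle this, I would restrict the sum defining $D_T$ to the grid points in $[-1,1]$, of which there are at least $2/\delta$: with $M:=\sup_{|t|\le 1}|B_\alpha(t)|$ this gives $D_T\ge 2\,e^{-\sqrt{2}M-1}$ uniformly in $\delta\in[0,1]$ and $T\ge 1$. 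Introducing the auxiliary event $E:=\{M\le T^{\alpha/2}\}$, whose complement has probability at most $\mathcal{C}\,e^{-\mathcal{c} T^\alpha}$ by Borell--TIS applied to $B_\alpha|_{[-1,1]}$, one obtains $R_T/D_T\le\mathcal{C}\,e^{-\mathcal{c} T^\alpha/2}$ on $G_T\cap E$ for $T$ large enough, and the on-event contribution is then controlled via $\xi_\alpha^\delta(T)\le\xi_\alpha^\delta\cdot D/D_T\le 2\xi_\alpha^\delta$ on $G_T\cap E$ together with Theorem~\ref{thm:tail_behavior_pickands}. On the complementary event $(G_T\cap E)^c$, Cauchy--Schwarz with the tail estimate $\mathbb{P}((G_T\cap E)^c)\le\mathcal{C}\,e^{-\mathcal{c} T^\alpha}$ and uniformly bounded $2p$-moments (for $\xi_\alpha^\delta$ by Theorem~\ref{thm:tail_behavior_pickands}, and for $\xi_\alpha^\delta(T)$ by an analogous tail estimate provable by the same technique as in the proof of Theorem~\ref{thm:tail_behavior_pickands}) handles the remaining contribution.
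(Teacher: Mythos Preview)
Your strategy---splitting into a good event on which the tails beyond $[-T,T]$ are negligible, then handling the complement by Cauchy--Schwarz---is sound and close in spirit to the paper's proof. The paper instead writes $\xi_\alpha^\delta-\xi_\alpha^\delta(T)=\beta_1-\beta_2\beta_3$ with $\beta_1=(N-N(T))/D$, $\beta_2=(D-D_T)/D$, $\beta_3=\xi_\alpha^\delta(T)$, and bounds moments of each factor separately via H\"older; but both routes ultimately hinge on the same two ingredients: (a) the process rarely dominates its drift outside $[-T,T]$ (your $\mathbb{P}(G_T^c)\le \mathcal C e^{-cT^\alpha}$, the paper's $p_2(T)$ and the tail of $\beta_2$), and (b) control of moments of $\xi_\alpha^\delta(T)$.

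The one genuine gap is in your last sentence. You assert that the $2p$-th moments of $\xi_\alpha^\delta(T)$ are \emph{uniformly bounded} in $(\delta,T)$, ``by the same technique as in the proof of Theorem~\ref{thm:tail_behavior_pickands}.'' For $\alpha\in(1,2)$ that technique does \emph{not} give a uniform bound: when one chops $[-T,T]$ into unit blocks $[k,k+1]$, the oscillation of the drift $|t|^\alpha$ on the extremal blocks is of order $T^{\alpha-1}$, and the tail estimate one actually obtains is
\[
\mathbb{P}\{\xi_\alpha^\delta(T)>x\}\ \le\ 2T\,\mathbb{P}\Big\{\sup_{t\in[0,1]}B_\alpha(t)>\tfrac{\log x-\mathcal C\,T^{\alpha-1}}{\sqrt 2}\Big\},
\]
which yields only $\mathbb{E}\big[(\xi_\alpha^\delta(T))^{2p}\big]\le \mathcal C_1\exp\{\mathcal C_2\max(1,T^{\alpha-1})\}$. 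This is precisely what the paper proves and uses (its bounds on $\mathbb{E}[\beta_3^4]$ and $\mathbb{E}[\kappa_p]$). Since $T^{\alpha-1}=o(T^\alpha)$, this growth is still absorbed by your factor $\mathbb{P}((G_T\cap E)^c)^{1/2}\le \mathcal C e^{-cT^\alpha/2}$, so your argument goes through once you replace ``uniformly bounded'' by this explicit $T$-dependent bound. Note also that uniform boundedness of the moments of $\xi_\alpha^\delta(T)$ is in fact a \emph{consequence} of the theorem (cf.~\eqref{eq:pth_moment_uniform_boundedness}), so invoking it inside the proof would be circular.
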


\subsection{Discussion}

We believe that finding the exact asymptotics of the speed of the discretization error $\mathcal H_{\alpha}-\mathcal H_\alpha^\delta$ is closely related to the behavior of fractional Brownian motion around the time of its supremum. We motivate this by the following heuristic:
\begin{align*}
\mathcal H_\alpha - \mathcal H_\alpha^\delta &= \E{\frac{
\sup_{t\in\R}e^{\sqrt 2B_\alpha(t)-|t|^{\alpha}} - \sup_{t\in\delta\Z}e^{\sqrt 2B_\alpha(t)-|t|^{\alpha}}}{
\delta\sum_{t\in \delta \Z}e^{\sqrt 2B_\alpha(t)-|t|^{\alpha} }}} \\
& \approx \E{\Delta(\delta) \cdot \frac{
\sup_{t\in\delta\Z}e^{\sqrt 2B_\alpha(t)-|t|^{\alpha}}}{
\delta\sum_{t\in \delta \Z}e^{\sqrt 2B_\alpha(t)-|t|^{\alpha} }}}, \\
& \approx \E{\Delta(\delta)}\cdot \mathcal H^\delta_\alpha,
\end{align*}
where $\Delta(\delta)$ is the difference between the supremum on the continuous and discrete grid, i.e. $\Delta(\delta) := \sup_{t\in\R}\{\sqrt{2}B_\alpha(t)-|t|^{\alpha}\} - \sup_{t\in\delta\Z}\{\sqrt{2}B_\alpha(t)-|t|^{\alpha}\}$. The first approximation above is due to the mean value theorem and the second approximation is based on the assumption that $\Delta(\delta)$ and $\xi^\delta_\alpha$ are asymptotically independent, as $\delta\to0$. Now, we believe that $\Delta(\delta)\sim \mathcal C\delta^{\alpha/2}$ due to self-similarity, where $\mathcal C>0$ is some constant, which would imply that $\mathcal H_\alpha-\mathcal H_\alpha^\delta \sim \mathcal C\mathcal H_\alpha\delta^{\alpha/2}$. This heuristic reasoning can be made rigorous in case $\alpha=1$, when $\sqrt{2}B_\alpha(t)-|t|^\alpha$ is a L{\'e}vy process (Brownian motion with drift). In this case, the asymptotic behavior of functionals such as $\E{\Delta(\delta)}$, as $\delta\to0$ can be explained by the weak convergence of trajectories around the time of supremum to the so-called \emph{L{\'e}vy process conditioned to be positive}, see \cite{ivanovs2018zooming} for more information on the topic. In fact, Theorem~\ref{thm:main}(iii) can be proven using the tools developed in \cite{bisewski2020zooming}. To the best of the authors' knowledge, there are no such results available for a general fractional Brownian motion. Although, it is worth mentioning that recently \citep{MR4158797} considered a related problem of penalizing fractional Brownian motion for being negative.

A problem related to the asymptotic behavior of $\mathcal H_\alpha- \mathcal H_\alpha^\delta$ was considered in \citep{MR3574693, MR3776210}, who have shown that $\e \sup_{t\in [0,1]}B_\alpha(t)-\e \sup_{t\in [0,1]_\delta}B_\alpha(t)$, decays like $\delta^{\alpha/2}$ up to logarithmic terms. We should emphasize that in Theorem~\ref{thm:main}, case $\alpha\in(0,1)$ we were able to establish that the upper bound for the discretization error decays \emph{exactly} like $\delta^{\alpha/2}$. In light of the discussion above, we believe that the result and the proving methodology of Theorem~\ref{thm:main}(i), could be useful in further research related to the discretization error for fractional Brownian motion.\\

\textit{Discretization error for asymptotic constants in the ruin theory for Gaussian processes.} Although arguably most celebrated, Pickands constants are not the only constants appearing in the asymptotic theory of Gaussian processes. Depending on the setting, other constants might appear. Among others, we distiguish: Parisian Pickands constants \cite{thirdprojectParisian, ParisRuinGenrealHashorvaJiDebicki, ParisianFiniteHoryzon}, sojourn Pickands constants \cite{SojournInfty,DebZbiXia}, Piterbarg-type constants \cite{20lectures,Pit96,Lanpeng2BM,longKrzys}, and generalized Pickands constants \cite{DE2002,MR2111193}. Just like the classical Pickands constants, their numerical values are known typically only in case $\alpha\in\{1,2\}$. In other cases, they need to be estimated and one encounters problems described in items (i-iii) in Section \ref{s:introduction}. In particular,
possibility is an approximation by discretization. We believe,
that under appropriate assumptions, using the technique from the proof of Theorem~\ref{thm:main}(ii), one could derive upper bounds for the discretization error, which are exact up to logarithmic terms.\\

\textit{Monotonicity of Pickands constants.} Based on the definition \eqref{delta_pick_const_def}, it is clear that for any $\alpha\in(0,2)$, the sequence $\{\mathcal H^{\delta}_\alpha, \mathcal H^{2\delta}_\alpha,\mathcal H^{4\delta}_\alpha, \ldots\}$ is decreasing for any fixed $\delta>0$. It is therefore natural to speculate
that $\delta\mapsto\mathcal H_\alpha^\delta$ is a decreasing function.
The explicit formulas for $\mathcal H_1^\delta$ and $\mathcal H_2^\delta$ given in Proposition~\ref{prop:formulas_alpha12} allow us to give the positive answer to this question in these cases. Namely,
\begin{corollary} \label{corollary_decreasing_Pickands_constant}
$\mathcal{H}^{\delta}_1$ and $\mathcal{H}^{\delta}_2$ are strictly
decreasing functions with respect to $\delta$ for all $\delta\ge 0$.
\end{corollary}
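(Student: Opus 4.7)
\emph{Plan.} In both cases the approach is to use the explicit formulas in Proposition~\ref{prop:formulas_alpha12}, establish strict monotonicity on $(0,\infty)$ by a direct derivative calculation, and then handle the endpoint $\delta=0$ using Theorem~\ref{thm:main}(iii--iv), which guarantee $\mathcal{H}_\alpha > \mathcal{H}_\alpha^{\delta_0}$ for some small $\delta_0 > 0$; combined with strict decrease on $(0,\infty)$ this upgrades to $\mathcal{H}_\alpha > \mathcal{H}_\alpha^\delta$ for every $\delta > 0$.

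\emph{Case $\alpha = 2$.} Substituting $u = \delta/\sqrt 2$, we rewrite $\mathcal{H}_2^\delta = \sqrt 2 \cdot u^{-1}\int_0^u \phi(x)\, dx$, and a short calculation shows that the sign of the $\delta$-derivative equals the sign of
\[
u\phi(u) - \int_0^u \phi(x)\, dx = -\int_0^u \bigl(\phi(x) - \phi(u)\bigr)\, dx < 0
\]
for every $u > 0$, since $\phi$ is strictly decreasing on $(0,\infty)$. This delivers the result for $\alpha = 2$.

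\emph{Case $\alpha = 1$.} Write $\mathcal{H}_1^\delta = 1/h(\delta)$ with $h(\delta) = \delta \exp\{2 g(\delta)\}$ and $g(\delta) = \sum_{k \ge 1} k^{-1} \Phi(-\sqrt{\delta k/2})$. It suffices to show $h$ is strictly increasing, i.e., $h'(\delta)/h(\delta) = 1/\delta + 2g'(\delta) > 0$. Using $\phi(\sqrt{\delta k/2}) = (2\pi)^{-1/2} e^{-\delta k/4}$, the derivative works out to $g'(\delta) = -(4\sqrt{\pi\delta})^{-1} \sum_{k\ge 1} k^{-1/2} e^{-\delta k/4}$, so the problem reduces to the strict inequality
\[
\sum_{k=1}^\infty \frac{e^{-\delta k/4}}{\sqrt k} \;<\; \frac{2\sqrt{\pi}}{\sqrt{\delta}}, \qquad \delta > 0.
\]
To close this, the plan is to apply the standard integral comparison: the function $f(x) = x^{-1/2} e^{-\delta x/4}$ is strictly decreasing on $(0,\infty)$, and although it is singular at $0$ it is integrable, with $\int_0^\infty f(x)\, dx = 2\sqrt{\pi/\delta}$. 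Hence $\sum_{k\ge 1} f(k) < \sum_{k \ge 1} \int_{k-1}^k f(x)\, dx = \int_0^\infty f(x)\, dx$, which is exactly the bound needed.

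\emph{Main obstacle.} The delicate point in the $\alpha = 1$ case is that the inequality above is asymptotically sharp as $\delta \to 0^+$ (the residual is of order $\zeta(1/2) + o(1)$, matching Theorem~\ref{thm:main}(iii)), so any approach must recover the exact constant $2\sqrt{\pi}$. The argument succeeds only because the $1/\delta$ produced by differentiating the $\delta$ factor is matched precisely by $\int_0^\infty x^{-1/2} e^{-\delta x/4}\, dx = 2\sqrt{\pi/\delta}$ coming from the integrable singularity at the origin; any cruder pointwise bound on the summands would blow the constant and break the proof.
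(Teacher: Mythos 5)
Your proof is correct and takes essentially the same route as the paper's: both cases start from Proposition~\ref{prop:formulas_alpha12}, the $\alpha=2$ case is the identical derivative computation, and the $\alpha=1$ case reduces to the same sharp inequality $\sum_{k\ge1}k^{-1/2}e^{-\delta k/4}<2\sqrt{\pi/\delta}$, proved by the same monotone comparison with the Gamma integral $\int_0^\infty x^{-1/2}e^{-\delta x/4}\,{\rm d}x$. The only differences are cosmetic: the paper justifies the termwise differentiation of the series explicitly (Lemma~\ref{lemma_v(eta)_derivative}), which you should add as a one-line uniform-convergence remark, and it handles the endpoint $\delta=0$ via $\mathcal H^\delta_\alpha\to\mathcal H_\alpha$ as $\delta\to0$ rather than through Theorem~\ref{thm:main}(iii--iv), which also covers the range $0<\delta<\delta_0$ that your phrasing glosses over.
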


\section{Proofs}\label{s:proofs}
In this section we give proofs. Define for $\alpha \in (0,2)$
$$Z_\alpha(t) = \sqrt 2 B_{\alpha}(t)-|t|^{\alpha},
\ \  \ t \in\R .$$
Assume that all considered random processes and variables are
defined on a complete general probability space $\Omega$ equipped with
a probability measure $\mathbb P$.
Let $\mathcal C,\mathcal C_1,\mathcal C_2,\ldots$ be some positive constants that may differ from line to line.

\subsection{Proof of Theorem~\ref{thm:main} case $\alpha\in(0,1)$}

The proof of Theorem~\ref{thm:main} in case $\alpha\in(0,1)$
is based on the following three results. In what follows, $\eta$ is independent of $\{Z_\alpha(t), t\in\R\}$ and follows a standard exponential distribution.

\begin{lemma}\label{lem:pickands_diff}
For all $\alpha\in (0,2)$
\begin{align*}
\mathcal H^{\delta/2}_{\alpha} - \mathcal H^{\delta}_{\alpha} & = \delta^{-1}\pk{\sup_{t\in\delta\Z\setminus\{0\}} Z_\alpha(t) < 0, \ \sup_{t\in\delta\Z\setminus\{0\}} Z_\alpha(t - \tfrac{\delta}{2}\cdot{\rm sgn}(t)) + \eta < 0}.
\end{align*}
\end{lemma}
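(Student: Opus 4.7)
The plan is to express both $\mathcal H^{\delta/2}_\alpha$ and $\mathcal H^{\delta}_\alpha$ via the Dieker--Yakir representation \eqref{xi_alpha_delta} with a common denominator, reduce the difference to an expectation involving a positive part, and then identify that positive part with the probability of an explicit event via exponential tilting.

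First, because $\delta$ is an integer multiple of $\delta/2$, the extension of \eqref{xi_alpha_delta} noted just after it allows us to write $\mathcal H^{\delta/2}_\alpha$ using the same denominator $\delta \sum_{t \in \delta\Z} e^{Z_\alpha(t)}$. Subtracting and decomposing $(\delta/2)\Z = \delta\Z \sqcup (\delta\Z + \delta/2)$, and setting $M := \sup_{t \in \delta\Z} Z_\alpha(t)$ and $N := \sup_{s \in \delta\Z + \delta/2} Z_\alpha(s)$, the numerator becomes $(e^N - e^M)_+$. Applying the identity $(e^N - e^M)_+ = e^N \, \pk{\eta < N - M \mid Z_\alpha}$ introduces the independent standard exponential $\eta$ and yields
\bqny{
\mathcal H^{\delta/2}_\alpha - \mathcal H^\delta_\alpha = \E{\frac{e^N \, \mathbbm{1}\{N > M + \eta\}}{\delta \sum_{t \in \delta\Z} e^{Z_\alpha(t)}}}.
}

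Next, I would expand $e^N \, \mathbbm{1}\{N > M + \eta\} = \sum_{s^* \in \delta\Z + \delta/2} e^{Z_\alpha(s^*)} \, \mathbbm{1}_{s^*}$, where $\mathbbm{1}_{s^*}$ is the indicator that $s^*$ is the (a.s.\ unique) argmax of $Z_\alpha$ on $\delta\Z + \delta/2$ and that $Z_\alpha(s^*) > Z_\alpha(t) + \eta$ for every $t \in \delta\Z$. For each $s^*$ I would apply the Dieker--Yakir-style tilting $\mathrm d \tilde{\mathbb P}_{s^*} = e^{Z_\alpha(s^*)} \, \mathrm d \mathbb P$, which is a probability measure since $\E{e^{Z_\alpha(s^*)}} = 1$. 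Girsanov combined with the stationarity of increments of $B_\alpha$ implies that under $\tilde{\mathbb P}_{s^*}$ the translated process $\tilde Z_\alpha(u) := Z_\alpha(u + s^*) - Z_\alpha(s^*)$ has the same law as $Z_\alpha$ under $\mathbb P$; in particular $-Z_\alpha(s^*) = \tilde Z_\alpha(-s^*)$.

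After this change of measure, $\mathbbm{1}_{s^*}$ transforms into the $s^*$-free event $\{\tilde Z_\alpha(v) < 0 \text{ for all } v \in \delta\Z \setminus \{0\}\} \cap \{\tilde Z_\alpha(w) + \eta < 0 \text{ for all } w \in \delta\Z + \delta/2\}$, while the denominator becomes $\delta \cdot e^{-\tilde Z_\alpha(-s^*)} \sum_{u \in \delta\Z + \delta/2} e^{\tilde Z_\alpha(u)}$ (using $\delta\Z - s^* = \delta\Z + \delta/2$). Summing over $s^* \in \delta\Z + \delta/2$ and using that $\{-s^* : s^* \in \delta\Z + \delta/2\} = \delta\Z + \delta/2$, the factor $\sum_{s^*} e^{\tilde Z_\alpha(-s^*)}$ equals $\sum_{u \in \delta\Z + \delta/2} e^{\tilde Z_\alpha(u)}$ and cancels the denominator exactly, leaving $\delta^{-1}$ times the probability of the event above. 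The identification $\delta\Z + \delta/2 = \{t - \tfrac{\delta}{2}\sgn(t) : t \in \delta\Z \setminus \{0\}\}$ puts the result into the form stated in the lemma. The main obstacle will be the careful bookkeeping of the tilting: the $s^*$-dependent factors $e^{\pm Z_\alpha(s^*)}$ must be tracked so that, after translating to $\tilde Z_\alpha$ and summing over $s^* \in \delta\Z + \delta/2$, all $s^*$-dependence collapses and the leftover weighted expectation simplifies to a clean probability.
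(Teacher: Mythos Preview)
Your argument is correct and follows a genuinely different route from the paper's. The paper proves the lemma via the classical Pickands-type representation
\[
\mathcal H_\alpha^{\delta} = \lim_{T\to\infty}\lim_{u\to\infty} \frac{\pk{\sup_{t\in[0,T]_\delta} X_\alpha(u^{-2/\alpha}t) > u}}{T\Psi(u)},
\]
writes $\mathcal H_\alpha^{\delta/2}-\mathcal H_\alpha^\delta$ as the corresponding difference of high-level exceedance probabilities, decomposes according to the location $\tau$ of the argmax, shifts by stationarity of the auxiliary process $X_\alpha$, and passes $u\to\infty$ via the conditional limit of Lemma~\ref{lem:conditional_convergence} (which is where the independent exponential $\eta$ enters), finishing with an $\ep$-sandwich argument for $T\to\infty$. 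Your approach bypasses all of this: starting from the Dieker--Yakir formula with the common denominator on $\delta\Z$, the identity $(e^N-e^M)_+=e^N\,\pk{\eta<N-M\mid Z_\alpha}$ introduces $\eta$ purely algebraically, and then Cameron--Martin tilting together with stationarity of increments turns each argmax-summand into the same $s^*$-free probability weighted by $e^{Z_\alpha(-s^*)}$, so that summing over $s^*$ cancels the denominator exactly. You avoid the auxiliary stationary process, the double limit, and the boundary $\ep$-argument entirely; in exchange you rely on the extended Dieker--Yakir representation (cited but not proved in the paper) and on a direct exponential-tilt computation. Two minor points worth making explicit when you write this up: the tilt here is Cameron--Martin for Gaussian measures rather than Girsanov proper, and the interchange of the infinite sum over $s^*$ with expectation (and the a.s.\ uniqueness and attainment of the argmax on $\delta\Z+\delta/2$) should be justified, which is routine since all terms are nonnegative.
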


As a side note, we remark that the representation in Lemma~\ref{lem:pickands_diff} yields a straightforward lower bound $\mathcal H^{\delta/2}_{\alpha} - \mathcal H^{\delta}_{\alpha} \geq \delta^{-1}\pk{\sup_{t\in(\delta/2)\Z\setminus\{0\}} Z_\alpha(t) + \eta < 0}$ for all $\alpha\in(0,2)$, $\delta>0$.

\begin{lemma}\label{lem:pickands_diff_bounds}
For all $\alpha\in(0,1)$ and $\delta>0$
\begin{equation*}
\mathcal H^{\delta/2}_{\alpha} - \mathcal H^{\delta}_{\alpha} \leq \delta^{-1}\pk{\sup_{t\in \delta\Z\setminus\{0\}} Z_\alpha(t) + \eta < 0}.
\end{equation*}
\end{lemma}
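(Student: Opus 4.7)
The starting point is the identity of Lemma~\ref{lem:pickands_diff}. Writing $E := \delta\Z\setminus\{0\}$, $O := (\delta/2)(2\Z+1)$, $N_E := \sup_{t\in E} Z_\alpha(t)$, $N_O := \sup_{s\in O}Z_\alpha(s)$, and noting that $s(t) := t - (\delta/2)\sgn(t)$ is a bijection $E\to O$, the claim reduces to
\[
\pk{N_E<0,\ N_O+\eta<0}\ \le\ \pk{N_E+\eta<0}.
\]
Subtracting and integrating out $\eta\sim\mathrm{Exp}(1)$, the difference of the two sides equals $\pk{N_E<-\eta,\ N_O\ge -\eta}-\pk{-\eta\le N_E<0,\ N_O<-\eta}$, so it is enough to establish the ``swap'' inequality
\[
\pk{-a\le N_E<0,\ N_O<-a}\ \le\ \pk{N_E<-a,\ N_O\ge-a}\qquad\text{for every }a>0.
\]

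The structural feature of $\alpha\in(0,1)$ I would exploit is subadditivity of $x\mapsto x^\alpha$ on $(0,\infty)$, which yields the uniform drift bound
\[
0\ \le\ g(t):=|t|^\alpha-|s(t)|^\alpha\ \le\ (\delta/2)^\alpha,\qquad t\in E.
\]
Consequently $Z_\alpha(s(t))-Z_\alpha(t)=\sqrt 2\bigl(B_\alpha(s(t))-B_\alpha(t)\bigr)+g(t)$ differs from a centred Gaussian of variance $2(\delta/2)^\alpha$ by a deterministic correction bounded uniformly by $(\delta/2)^\alpha$. Combining this with the stationary-increment identities $(B_\alpha(\pm(t+\delta/2))-B_\alpha(\pm\delta/2))_{t\ge 0}\eqd(B_\alpha(\pm t))_{t\ge 0}$, the plan is to set up a coupling between $(Z_\alpha(t))_{t\in E}$ and $(Z_\alpha(s))_{s\in O}$ in which $N_O$ is realised as a random shift of $N_E$ plus a drift correction of size at most $(\delta/2)^\alpha$. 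Under such a coupling the joint law of $(N_E,N_O)$ should place at least as much mass on ``$N_E$ very negative, $N_O$ moderate'' as on its swap, which is precisely the required inequality.

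The main obstacle is the non-Markovian character of fBm for $\alpha\ne 1$: the shifts $B_\alpha(\pm\delta/2)$ remain correlated with the post-shift trajectories, so the coupling does not factorise and Slepian-type inequalities (which require matching variances) do not apply off the shelf. I would handle this by first conditioning on the pair $(B_\alpha(\delta/2),B_\alpha(-\delta/2))$, thereby reducing the problem to comparing two conditional centred Gaussian fields on $E$ and $O$ which share the same increment variances under the bijection $t\leftrightarrow s(t)$ on each half-line. The uniform estimate $g(t)\le(\delta/2)^\alpha$ then permits one to absorb the deterministic correction into the exponential budget of $\eta$ at probabilistic cost $1-e^{-(\delta/2)^\alpha}=O(\delta^\alpha)$, which is exactly the scaling needed for Theorem~\ref{thm:main}(i). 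The entire construction collapses for $\alpha>1$ because $g(t)\to\infty$ as $|t|\to\infty$, which is precisely why Theorem~\ref{thm:main}(ii) has to accept a logarithmic loss.
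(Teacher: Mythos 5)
Your reduction of the lemma (via Lemma~\ref{lem:pickands_diff} and the bijection $t\mapsto t-\tfrac{\delta}{2}\sgn(t)$ between $E=\delta\Z\setminus\{0\}$ and $O$) to the inequality $\pk{N_E<0,\,N_O+\eta<0}\le\pk{N_E+\eta<0}$, and further to the ``swap'' inequality, is correct but elementary; the entire content of the lemma is the comparison between the field on $O$ and the field on $E$, and that is the part you never actually prove. The coupling you propose does not exist as described: $t\mapsto t-\tfrac{\delta}{2}\sgn(t)$ is not a translation of $\R$ (it shifts the two half-lines in opposite directions), so stationarity of increments gives equality in law only on each half-line separately; jointly, $\{Z_\alpha(s)\colon s\in O\}$ is \emph{not} a randomly shifted copy of $\{Z_\alpha(t)\colon t\in E\}$, because fBm with $\alpha\neq1$ is dependent across the origin. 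Conditioning on $(B_\alpha(-\delta/2),B_\alpha(\delta/2))$ does not repair this: the conditional covariance structures of the two fields under your bijection do not coincide (note $\pm\delta/2\in O$, so one field contains its own conditioning points), and you supply no Slepian-type or coupling argument for the conditional fields --- the sentence ``the joint law of $(N_E,N_O)$ should place at least as much mass on \dots as on its swap'' is precisely the statement to be proved, asserted rather than derived. Finally, the accounting of the drift term is off: absorbing $g(t)\le(\delta/2)^\alpha$ into $\eta$ at an \emph{additive} probabilistic cost $1-e^{-(\delta/2)^\alpha}=O(\delta^\alpha)$ is fatal, since the probability this lemma must ultimately bound is of order $\delta^{1+\alpha/2}\ll\delta^\alpha$ (cf.\ Proposition~\ref{prop:DY_plus_eta}); if you instead use memorylessness of $\eta$ to make the cost multiplicative, you obtain the inequality only up to a factor $e^{(\delta/2)^\alpha}$, not the exact bound stated.

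For comparison, the paper's proof avoids any coupling: it drops the event $\{\sup_{t\in\delta\Z\setminus\{0\}}Z_\alpha(t)<0\}$ altogether, rewrites the remaining event in terms of the normalized variables $\sqrt2\,B_\alpha\bigl(t-\tfrac{\delta}{2}\sgn(t)\bigr)/\bigl|t-\tfrac{\delta}{2}\sgn(t)\bigr|^{\alpha/2}$, which all have variance $2$, uses $|t-\tfrac{\delta}{2}\sgn(t)|\le|t|$ to handle the drift and the $\eta$-term monotonically, and then applies Slepian's lemma after an explicit derivative computation showing that the relevant correlations are monotone in the shift; the restriction $\alpha\in(0,1)$ enters exactly in that covariance comparison. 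If you want to salvage your route, you would need to establish a comparison inequality of this Slepian type for your conditional (or normalized) fields --- at which point you are essentially reconstructing the paper's argument.
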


\begin{proposition}\label{prop:DY_plus_eta} For any $\alpha\in(0,2)$, there exists $\mathcal C> 0$ such that
\begin{align*}
\pk{\sup_{t\in\delta\Z\setminus\{0\}} Z_\alpha(t) + \eta < 0} \leq \mathcal C \delta^{1+\alpha/2}
\end{align*}
for all $\delta>0$ small enough.
\end{proposition}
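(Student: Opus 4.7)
My plan is to first integrate out the exponential $\eta$, then rescale using the self-similarity of fBm, and finally reduce the proposition to a scale-free estimate via a layer-cake representation and a persistence-type bound.

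First, setting $M_\delta := \sup_{t\in\delta\Z\setminus\{0\}} Z_\alpha(t)$ and conditioning on the fBm, the independence of $\eta\sim\mathrm{Exp}(1)$ gives
\[
\pk{\sup_{t\in\delta\Z\setminus\{0\}} Z_\alpha(t) + \eta < 0} = \e\bigl[(1 - e^{M_\delta})\,\ind\{M_\delta < 0\}\bigr] \leq \e[M_\delta^-],
\]
where the inequality uses the elementary bound $1 - e^x \leq -x$ valid for $x \leq 0$. Then I would invoke the self-similarity $\{B_\alpha(k\delta)\}_{k\in\Z} \eqd \delta^{\alpha/2}\{B_\alpha(k)\}_{k\in\Z}$ to rescale, yielding
\[
\e[M_\delta^-] = \delta^{\alpha/2}\,\e[U_\delta^-], \qquad U_\delta := \sup_{k\in\Z\setminus\{0\}}\bigl[\sqrt 2 B_\alpha(k) - \delta^{\alpha/2}|k|^\alpha\bigr],
\]
so it suffices to prove the scale-free estimate $\e[U_\delta^-] \leq \mathcal C \delta$.

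For this remaining bound I would use the layer-cake identity $\e[U_\delta^-] = \int_0^\infty \pk{U_\delta \leq -u}\,du$ and split the integration at a threshold $u_0$ of order $\delta^{\alpha/2}$. For $u \in [0, u_0]$ I would bound the integrand by the \emph{one-sided persistence probability} $\pk{U_\delta \leq 0}$, i.e.\ the probability that $\sqrt 2 B_\alpha(k)$ stays below the slowly-growing barrier $\delta^{\alpha/2}|k|^\alpha$ for every $k\neq 0$; the target is a bound of the form $\mathcal C \delta^{1-\alpha/2}$, so that $u_0 \cdot \pk{U_\delta \leq 0} \leq \mathcal C \delta$. For $u > u_0$, I would bound $\pk{U_\delta \leq -u}$ by the two-point constraint at $k = \pm 1$, namely $\sqrt 2 B_\alpha(\pm 1) < \delta^{\alpha/2} - u$, where the underlying bivariate Gaussian has variance $2$ and correlation $\rho = 1 - 2^{\alpha-1}$; integrating the resulting rapidly-decaying Gaussian tail over $[u_0, \infty)$ produces a further $\mathcal O(\delta)$ contribution.

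The \textbf{main obstacle} is obtaining the persistence-type estimate $\pk{U_\delta \leq 0} \leq \mathcal C \delta^{1-\alpha/2}$. For $\alpha = 1$ this follows by independence of the positive and negative half-lines of Brownian motion together with the classical reflection identity $\pk{\max_{k\geq 1}(S_k - k\sqrt{\delta/2}) < 0} = 2\Phi(\sqrt{\delta/2}) - 1$ for a unit-variance Gaussian random walk $S_k$ with drift $\sqrt{\delta/2}$, applied on each side. For general $\alpha \in (0,2)$ the two half-lines are correlated through the fBm (positively for $\alpha < 1$, negatively for $\alpha > 1$), and one must appeal either to a Slepian-type comparison against a reference Gaussian process with independent increments, or to a direct one-sided-barrier estimate for fBm sampled at integer times; the expected exponent $1 - H = 1 - \alpha/2$ is consistent with the classical persistence literature for fractional Brownian motion, and the effective cutoff $|k| \asymp \delta^{-1}$ (where the drift $\delta^{\alpha/2}|k|^\alpha$ starts to dominate the typical fluctuations $|k|^{\alpha/2}$ of the fBm) dictates this scaling.
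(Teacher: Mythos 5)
Your opening reduction is sound: the identity $\pk{M_\delta+\eta<0}=\e\bigl[(1-e^{M_\delta})\ind\{M_\delta<0\}\bigr]\le\e[M_\delta^-]$ and the rescaling $\e[M_\delta^-]=\delta^{\alpha/2}\,\e[U_\delta^-]$ are correct, and the quantity you call the main obstacle is in fact not an obstacle at all: $\pk{U_\delta\le0}=\pk{\sup_{t\in\delta\Z\setminus\{0\}}Z_\alpha(t)\le0}$ is of order $\delta$ (stronger than your target $\delta^{1-\alpha/2}$) by Proposition~4 of Dieker--Yakir, which is precisely the input the paper invokes at the end of its own proof; so the $u\in[0,u_0]$ part of your layer-cake integral is harmless.

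The genuine gap is the piece $u>u_0$. Bounding $\pk{U_\delta\le-u}$ by the two-point constraint at $k=\pm1$ discards the requirement that the path stay negative at all the other grid points, and with it the crucial factor $\delta$. Concretely, for $u$ ranging from $u_0\asymp\delta^{\alpha/2}$ up to order one, the probability $\pk{\sqrt2B_\alpha(1)<\delta^{\alpha/2}-u,\ \sqrt2B_\alpha(-1)<\delta^{\alpha/2}-u}$ is of constant order (it tends to $\pk{B_\alpha(1)<0,B_\alpha(-1)<0}>0$ as $\delta,u\to0$), so $\int_{u_0}^\infty\pk{U_\delta\le-u}\,du$ bounded this way is $O(1)$, not $O(\delta)$; after multiplying by the prefactor $\delta^{\alpha/2}$ you obtain only $O(\delta^{\alpha/2})$, far short of $\delta^{1+\alpha/2}$. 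What is actually needed is a joint estimate of the form $\pk{U_\delta\le-u}\le\mathcal C\,\delta\,\rho(u)$ with $\int_0^\infty\rho(u)\,du<\infty$, i.e.\ you must show that forcing deep negativity near the origin does not destroy the order-$\delta$ cost of the global persistence event; this uniform-in-depth decorrelation is the real content of the proposition, and your outline does not supply it. The paper's proof produces exactly this control by a different route: it keeps $\eta$ rather than integrating it out, conditions on $(Z_\alpha(-\delta),Z_\alpha(\delta))$, writes the remaining grid values as an independent residual plus $c^-(k)Z_\alpha(-\delta)+c^+(k)Z_\alpha(\delta)$ with $c^-(k)+c^+(k)\le\lambda^*$ (Lemma~\ref{lem:properties_of_c}), compares the conditional density with the exponential shift to the one without it, $g^-\le\mathcal C f^-$ (Lemma~\ref{lem:conditional_densities}), and then factors the bound as $q(\delta,\lambda^*)\asymp\delta^{\alpha/2}$ (Lemma~\ref{lem:q(delta,lambda)}) times $\pk{\sup_{t\in\delta\Z\setminus\{0\}}Z_\alpha(t)\le0}=O(\delta)$. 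Unless you prove an analogous conditional comparison (or otherwise retain the $\delta$ factor uniformly in $u$), the layer-cake argument cannot close.
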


\begin{proof}[Proof of Theorem~\ref{thm:main}, $\alpha\in (0,1)$.]
Using the fact that $\mathcal H^\delta_\alpha \to \mathcal H_\alpha$, as $\delta\downarrow0$, we may represent the discretization error $\mathcal H_\alpha - \mathcal H^\delta_\alpha$, as a telescopic series, that is
\begin{equation*}
\mathcal H_\alpha - \mathcal H^\delta_\alpha = \sum_{k=0}^\infty \mathcal H^{2^{-(k+1)}\delta}_\alpha - \mathcal H^{2^{-k}\delta}_\alpha.
\end{equation*}
Combining Lemma~\ref{lem:pickands_diff_bounds} and Proposition~\ref{prop:DY_plus_eta} we find that there exists $\mathcal C_0>0$ such that
\begin{align*}
\mathcal H_\alpha - \mathcal H^\delta_\alpha \leq \sum_{k=0}^\infty \mathcal C_0 (2^{-k}\delta)^{\alpha/2}
\end{align*}
for all $\delta$ small enough. This completes the proof with
$\mathcal C = \frac{\mathcal C_0}{1-2^{-\alpha/2}}$.
\end{proof}

We remark that if the upper bound in Lemma~\ref{lem:pickands_diff_bounds} holds also for $\alpha\in(1,2)$, then the upper bound in Theorem~\ref{thm:main}(i) would hold for all $\alpha\in(0,2)$.
The remainder of this section is devoted to proving Lemma~\ref{lem:pickands_diff}, Lemma~\ref{lem:pickands_diff_bounds}, and Proposition~\ref{prop:DY_plus_eta}.

In what follows, for any $\alpha\in(0,2)$, let $\{X_\alpha(t), t\in\R\}$ be a centered, stationary Gaussian process with $\var\{X_\alpha(t)\} = 1$, whose covariance function satisfies
\begin{equation}\label{eq:X_alpha_cov_assumption}
\cov(X_\alpha(t),X_\alpha(0)) = 1-|t|^\alpha+o(|t|^\alpha), \quad  t \to 0.
\end{equation}
Before we give the proof of Lemma~\ref{lem:pickands_diff}, we introduce the following result.

\begin{lemma}\label{lem:conditional_convergence}
The finite-dimensional distributions of $\{u(X_\alpha(u^{-2/\alpha}t)-u) \mid X_\alpha(0)>u, t\in\R\}$ converge weakly to the finite-dimensional distributions of $\{Z_\alpha(t)+\eta, t\in\R\}$, where $\eta$ is a random variable independent of $\{Z_\alpha(t), t\in\R\}$ following a standard exponential distribution.
\end{lemma}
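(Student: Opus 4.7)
The plan is to run a classical Pickands-style conditional scaling argument, decomposing $X_\alpha$ onto $X_\alpha(0)$ plus an independent Gaussian remainder. With $r(s) := \cov(X_\alpha(s), X_\alpha(0))$, write
\[X_\alpha(s) = r(s)X_\alpha(0) + \tilde Y(s),\]
where $\tilde Y$ is a centred Gaussian process independent of $X_\alpha(0)$. Introducing the auxiliary variable $V := u(X_\alpha(0) - u)$ and plugging in $s = u^{-2/\alpha}t$ yields
\[u(X_\alpha(u^{-2/\alpha}t) - u) = u^2\bigl(r(u^{-2/\alpha}t) - 1\bigr) + r(u^{-2/\alpha}t)\,V + u\,\tilde Y(u^{-2/\alpha}t).\]
The strategy is to analyse the three summands separately, together with the conditional distribution of $V$.

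First, under the conditional measure $\pk{\cdot \mid X_\alpha(0) > u}$, the event $\{V > v\}$ corresponds to $\{X_\alpha(0) > u + v/u\}$, and a standard Mills-ratio expansion (using $\pk{X_\alpha(0) > u}\sim \phi(u)/u$ and $\phi(u+v/u)=\phi(u)e^{-v-v^2/(2u^2)}$) gives
\[\pk{V > v \mid X_\alpha(0) > u} = \frac{\pk{X_\alpha(0) > u + v/u}}{\pk{X_\alpha(0) > u}} \longrightarrow e^{-v}, \qquad u\to\infty,\]
so $V$ converges in distribution to a standard exponential $\eta$ under the conditional measure. Next, the covariance assumption \eqref{eq:X_alpha_cov_assumption} yields $r(u^{-2/\alpha}t) = 1 - u^{-2}|t|^\alpha + o(u^{-2})$, whence $u^2(r(u^{-2/\alpha}t) - 1) \to -|t|^\alpha$ and $r(u^{-2/\alpha}t) \to 1$ pointwise in $t$. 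For the Gaussian remainder, a straightforward expansion of
\[u^2 \cov\bigl(\tilde Y(u^{-2/\alpha}t_1), \tilde Y(u^{-2/\alpha}t_2)\bigr) = u^2\bigl[r(u^{-2/\alpha}(t_1 - t_2)) - r(u^{-2/\alpha}t_1)\,r(u^{-2/\alpha}t_2)\bigr]\]
to leading order produces the limit $|t_1|^\alpha + |t_2|^\alpha - |t_1 - t_2|^\alpha$, which coincides with $\cov(\sqrt 2 B_\alpha(t_1), \sqrt 2 B_\alpha(t_2))$. Since the processes involved are centred Gaussian, convergence of covariances at finitely many points suffices for finite-dimensional convergence of $\{u\tilde Y(u^{-2/\alpha}t)\}$ to $\{\sqrt 2 B_\alpha(t)\}$.

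To assemble the pieces, I would exploit the key structural fact that $\tilde Y$ is unconditionally independent of $X_\alpha(0)$, and hence of $V$ and of the conditioning event $\{X_\alpha(0) > u\}$. Consequently, the conditional joint law of $(V, u\tilde Y(u^{-2/\alpha}\cdot))$ factorises into the conditional law of $V$ and the unconditional Gaussian law of $u\tilde Y(u^{-2/\alpha}\cdot)$, so that the limiting $\eta$ and $\sqrt 2 B_\alpha$ are independent. Combining the three summands then gives the claimed finite-dimensional limit $-|t|^\alpha + \eta + \sqrt 2 B_\alpha(t) = Z_\alpha(t) + \eta$. The only delicate point is preserving this asymptotic independence across the conditioning, but it follows for free from the orthogonal decomposition; the rest of the work is routine Taylor expansion and Mills-ratio bookkeeping.
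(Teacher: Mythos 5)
Your proof is correct. Note, however, that the paper does not actually prove this lemma at all: it simply cites Albin and Choi \cite[Lemma~2]{MR2685014}, where the finite-dimensional convergence is established for $t\in\R_+$, and remarks that the extension to $t\in\R$ is straightforward. Your argument is the classical self-contained route behind that cited result: the orthogonal decomposition $X_\alpha(s)=r(s)X_\alpha(0)+\tilde Y(s)$ with $\tilde Y\perp X_\alpha(0)$, the Mills-ratio computation giving $V=u(X_\alpha(0)-u)\Rightarrow\eta$ under the conditioning, the covariance expansion $u^2\bigl(r(u^{-2/\alpha}t)-1\bigr)\to-|t|^\alpha$, and the verification that $u^2\bigl[r(u^{-2/\alpha}(t_1-t_2))-r(u^{-2/\alpha}t_1)r(u^{-2/\alpha}t_2)\bigr]\to|t_1|^\alpha+|t_2|^\alpha-|t_1-t_2|^\alpha$, which identifies the Gaussian remainder limit as $\sqrt2 B_\alpha$; all of these computations check out, and your observation that the conditional joint law of $(V,u\tilde Y(u^{-2/\alpha}\cdot))$ factorises because the conditioning involves only $X_\alpha(0)$ is exactly the right way to secure the independence of $\eta$ and $Z_\alpha$ in the limit (joint convergence then follows since the characteristic functions factorise, and Slutsky handles the $u$-dependent deterministic coefficients $r(u^{-2/\alpha}t)\to1$). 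What your approach buys is a complete proof valid for all $t\in\R$ at once, with the symmetric case $t<0$ requiring no separate treatment, whereas the paper's approach buys brevity by outsourcing the argument to the literature and waving at the extension from $\R_+$ to $\R$.
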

The result in Lemma~\ref{lem:conditional_convergence} is well-known; see,
 e.g., \cite[Lemma~2]{MR2685014}, where the convergence of finite-dimensional distributions is
established on $t\in\R_+$. The extension to $t\in\R$ is straightforward.

\begin{proof}[Proof of Lemma~\ref{lem:pickands_diff}]
The following proof is very similar in its flavor to the proof of \cite[Lemma~3.2]{bisewski2021harmonic}. From \cite[Lemma~9.2.2]{20lectures} and the classical definition of Pickands constant it follows that for any $\alpha\in (0,2)$ and
$\delta\ge 0$
\begin{align*}
\mathcal H_\alpha^{\delta} = \lim_{T\to\infty}\lim_{u\to\infty} \frac{\pk{\sup_{t\in
[0,T]_\delta} X_\alpha(u^{-2/\alpha}t) > u}}{T\Psi(u)},
\end{align*}
where $\Psi(u)$ is the complementary cdf (tail) of the standard normal distribution and $\{X_\alpha, t\in\R\}$ is the process introduced above Eq.~\eqref{eq:X_alpha_cov_assumption}. Therefore,
\begin{align*}
\mathcal H^{\delta/2}_{\alpha} - \mathcal H^{\delta}_{\alpha}
& = \lim_{T\to\infty}\lim_{u\to\infty}
\frac{\pk{\max_{t\in[0,T]_{\delta/2}} X_\alpha(u^{-2/\alpha}t) > u,
\max_{t\in[0,T]_\delta} X_\alpha(u^{-2/\alpha}t) < u}}{T\Psi(u)}.
\end{align*}
Now, notice that we can decompose the event in the numerator above into a sum of disjoint events
\begin{align*}
& \frac{1}{\Psi(u)}\pk{\displaystyle\max_{t\in[0,T]_{\delta/2}}
X_\alpha(u^{-2/\alpha}t) > u, \max_{t\in[0,T]_\delta} X_\alpha(u^{-2/\alpha}t) < u} \\
& = \sum_{\tau\in[0,T]_{\delta/2}}
\pk{\max_{t\in [0,T]_{\delta/2}} X_\alpha(u^{-2/\alpha}t) \leq X_\alpha(u^{-2/\alpha}\tau), \max_{t\in
[0,T]_{\delta}} X_\alpha(u^{-2/\alpha}t) \leq u \mid  X_\alpha(u^{-2/\alpha}\tau) > u}.
\end{align*}
Using the stationarity of the process $X_\alpha$, the above is equal to
\begin{align*}
& \sum_{\tau\in [0,T]_{\delta/2}}
\pk{\max_{t\in[0,T]_{\delta/2}}
X_\alpha(u^{-2/\alpha}(t-\tau)) \leq X_\alpha(0),
\max_{t\in[0,T]_{\delta}} X_\alpha(u^{-2/\alpha}(t-\tau)) \leq u \mid  X_\alpha(0) > u}.
\end{align*}
Applying Lemma~\ref{lem:conditional_convergence} to each element of the sum above, we find that the sum above converges to $\sum_{\tau\in[0,T]_{\delta/2}} U(\tau,T)$, as $u\to\infty$, where
\begin{align*}
U(\tau,T) :=  \pk{\max_{t\in[0,T]_{\delta/2}} Z_\alpha(t-\tau) \leq 0, \max_{t\in\delta\Z\cap[0,T]} Z_\alpha(t-\tau) + \eta \leq 0}.
\end{align*}
We have now established that $\mathcal H_\alpha^{\delta/2}-\mathcal H_\alpha^\delta = \lim_{T\to\infty}\frac{1}{T}\sum_{\tau\in[0,T]_{\delta/2}}U(\tau,T)$. Clearly,
\[U(\tau,\infty) = U(0,\infty) = \pk{\max_{t\in(\delta/2)\Z\setminus\{0\}} Z_\alpha(t) < 0, \max_{t\in\delta\Z\setminus\{0\}} Z_\alpha(t - \tfrac{\delta}{2}\cdot{\rm sgn}(t)) + \eta < 0}.\]
We will now show that $\mathcal H_\alpha^{\delta/2}-\mathcal H_\alpha^{\delta}$ is lower bounded and upper bounded by $\delta^{-1}U(0,\infty)$, which will complete the proof. For the lower bound see that
\begin{equation*}
\mathcal H_\alpha^{\delta/2}-\mathcal H_\alpha^\delta \geq \lim_{T\to\infty}\frac{1}{T}\sum_{\tau\in[0,T]_{\delta/2}} U(\tau,\infty),
\end{equation*}
where the limit is equal to $\delta^{-1}U(0,\infty)$ because the sum above has $[T(\delta/2)^{-1}]$ elements, half of which is equal to $0$ and the other half is equal to $U(0,\infty)$. In order to show the upper bound consider $\ep>0$. For any $\tau\in(\ep T,(1-\ep)T)_{\delta/2}$ we have
\begin{align*}
U(\tau,T) \leq \overline U(T,\ep) :=
\pk{\max_{t\in(-\ep T,\ep T)_{\delta/2}}
Z_\alpha(t) \leq 0, \max_{t\in(-\ep T,\ep T)_\delta} Z_\alpha(t) + \eta \leq 0}.
\end{align*}
Furthermore, we have the following decomposition
\begin{align*}
\mathcal H_\alpha^{\delta/2}-\mathcal H_\alpha^\delta  & =  \lim_{T\to\infty}\frac{1}{T}\left(\sum_{\tau\in(\delta/2)\Z\cap I_-} U(\tau,T) + \sum_{\tau\in(\delta/2)\Z \cap I_0} U(\tau,T) + \sum_{\tau\in(\delta/2)\Z\cap I_+} U(\tau,T)\right),
\end{align*}
where $I_- := [0,\ep T]$, $I_0 := (\ep T,(1-\ep)T)$, $I_+ := [(1-\ep),T]$. The first and the last sum can be bounded by their number of elements $[\ep T(\delta/2)^{-1}]$ because $U(\tau,T)\leq 1$. The middle sum can be bounded by $\frac{1}{2}\cdot [(1-2\ep)T(\delta/2)^{-1}]\overline U(T,\ep)$ because half of its elements are equal to 0 and the other half can be upper bounded by $\overline U(T,\ep)$. After passing with $T\to\infty$ this gives us
\begin{align*}
\mathcal H_\alpha^{\delta/2}-\mathcal H_\alpha^\delta\leq 4\ep\delta^{-1} + (1-2\ep)\delta^{-1}U(0,\infty)
\end{align*}
because $\overline U(T,\ep) \to U(0,\infty)$, as $T\to\infty$. Finally, after passing $\ep\to0$ we obtain the desired result.
\end{proof}

\begin{proof}[Proof of Lemma~\ref{lem:pickands_diff_bounds}]
In the light of Lemma~\ref{lem:pickands_diff}, it suffices to show that
\begin{align*}
\pk{\sup_{t\in\delta\Z\setminus\{0\}} Z_\alpha(t - \tfrac{\delta}{2}\cdot{\rm sgn}(t)) + \eta < 0} \leq \pk{\sup_{t\in\delta\Z\setminus\{0\}} Z_\alpha(t) + \eta < 0}.
\end{align*}
The left-hand side of the above equals to
\begin{align*}
& \pk{\sup_{t\in\delta\Z\setminus\{0\}} \sqrt{2}B_\alpha(t - \tfrac{\delta}{2}\cdot{\rm sgn}(t)) - |t - \tfrac{\delta}{2}\cdot{\rm sgn}(t)|^\alpha + \eta < 0} \\
& \qquad = \pk{\sup_{t\in\delta\Z\setminus\{0\}} \frac{\sqrt{2}B_\alpha(t - \tfrac{\delta}{2}\cdot{\rm sgn}(t))}{|t - \tfrac{\delta}{2}\cdot{\rm sgn}(t)|^{\alpha/2}} - |t - \tfrac{\delta}{2}\cdot{\rm sgn}(t)|^{\alpha/2} + \frac{\eta}{|t - \tfrac{\delta}{2}\cdot{\rm sgn}(t)|^{\alpha/2}} < 0} \\
& \qquad \leq \pk{\sup_{t\in\delta\Z\setminus\{0\}} \frac{\sqrt{2}B_\alpha(t - \tfrac{\delta}{2}\cdot{\rm sgn}(t))}{|t - \tfrac{\delta}{2}\cdot{\rm sgn}(t)|^{\alpha/2}} - |t|^{\alpha/2} + \frac{\eta}{|t|^{\alpha/2}} < 0}.
\end{align*}
Now, for all $t,s\in\delta\Z\setminus\{0\}$ it holds that
\begin{equation}\label{eq:to_show:slepian}
\cov\left(\frac{\sqrt{2}B_\alpha(t - \tfrac{\delta}{2}\cdot{\rm sgn}(t))}{|t - \tfrac{\delta}{2}\cdot{\rm sgn}(t)|^{\alpha/2}},  \frac{\sqrt{2}B_\alpha(s - \tfrac{\delta}{2}\cdot{\rm sgn}(s))}{|s - \tfrac{\delta}{2}\cdot{\rm sgn}(s)|^{\alpha/2}} \right) \leq \cov\left(\frac{\sqrt{2}B_\alpha(t)}{|t|^{\alpha/2}}, \frac{\sqrt{2}B_\alpha(s)}{|s|^{\alpha/2}}\right),
\end{equation}
which is shown below. Since for $t=s$, the covariances in Eq.~\eqref{eq:to_show:slepian} are equal, we may use Slepian lemma \cite[Lemma~2.1.1]{20lectures} and obtain
\begin{align*}
\pk{\sup_{t\in\delta\Z\setminus\{0\}} \frac{\sqrt{2}B_\alpha(t - \tfrac{\delta}{2}\cdot{\rm sgn}(t))}{|t - \tfrac{\delta}{2}\cdot{\rm sgn}(t)|^{\alpha/2}} - |t|^{\alpha/2} + \frac{\eta}{|t|^{\alpha/2}} < 0} & \leq \pk{\sup_{t\in\delta\Z\setminus\{0\}} \frac{\sqrt{2}B_\alpha(t)}{|t|^{\alpha/2}} - |t|^{\alpha/2} + \frac{\eta}{|t|^{\alpha/2}} < 0}
\end{align*}
and the claim follows. It is left to show Eq.~\eqref{eq:to_show:slepian}. Let $t,s\in\R$ be fixed and let
\begin{align*}
c(\delta;t,s) := \cov\left(\frac{B_\alpha(t + \delta\cdot{\rm sgn}(t))}{|t + \delta\cdot{\rm sgn}(t)|^{\alpha/2}},  \frac{B_\alpha(s + \delta\cdot{\rm sgn}(s))}{|s + \delta\cdot{\rm sgn}(s)|^{\alpha/2}} \right).
\end{align*}
We will show that $\delta\mapsto c(\delta;t,s)$ is a non-decreasing function, which will conclude the proof. We have
\begin{align*}
c(\delta;t,s) = \frac{|t + \delta\cdot{\rm sgn}(t)|^\alpha + |s + \delta\cdot{\rm sgn}(s)|^\alpha - |t-s + \delta\cdot({\rm sgn}(t)-{\rm sgn}(s))|^\alpha}{2|s + \delta\cdot{\rm sgn}(s)|^{\alpha/2}\cdot |t + \delta\cdot{\rm sgn}(t)|^{\alpha/2}}.
\end{align*}
We consider two cases: (i) $t,s>0$, and (ii) $s<0<t$. Consider case (i) first. Without loss of generality we assume that $t\geq s$, then
\begin{equation*}
c(\delta;t,s) = \frac{(t + \delta)^\alpha + (s + \delta)^\alpha - (t-s)^\alpha}{2((s + \delta)(t + \delta))^{\alpha/2}}.
\end{equation*}
It suffices to show that the first derivative of $\delta\mapsto c(\delta;t,s)$ is nonnegative. We have
\begin{align*}
\frac{\partial}{\partial\delta} c(\delta,t,s) = \frac{\alpha(1-x)(t+\delta)^{\alpha/2}/4}{(s+\delta)^{1+\alpha/2}} \cdot \bigg( (1-x)^{\alpha-1}(1+x) - x^{\alpha}-1) \bigg),
\end{align*}
where $x := \frac{s+\delta}{t+\delta} \in (0,1]$. The derivative above is nonnegative iff $G_1(x,\alpha) := (1-x)^{\alpha-1}(1+x) + x^{\alpha} - 1 \geq 0$ for all $x\in(0,1]$. It is easy to see that, for any fixed $x\in(0,1]$, $\alpha\mapsto G_1(x,\alpha)$ is a non-decreasing function; this observation combined with the fact that  $G(x,2)=0$ completes the proof of case (i).
In case (ii) we need to show that
\begin{align*}
c(\delta;t,-s) =\frac{(t + \delta)^\alpha + (s + \delta)^\alpha - (t+s + 2\delta)^\alpha}{2((s+\delta)(t+\delta))^{\alpha/2}}
\end{align*}
is a non-decreasing function of $x$ for any $s,t>0$. Without the loss of generality let $0<s\leq t$. Again, we take the first derivative of the above and see that
\begin{align}
\nonumber& \frac{\partial}{\partial\delta} c(\delta,t,s) = \frac{\alpha(1-x)(t+\delta)^{\alpha/2}/4}{(s+\delta)^{1+\alpha/2}} \cdot \bigg( (1-x)(1+x)^{\alpha-1} + x^\alpha - 1) \bigg),
\end{align}
where $x := \frac{s+\delta}{t+\delta}\in(0,1]$. The derivative above is non-negative iff $G_2(x,\alpha) := (1-x)(1+x)^{\alpha-1} + x^\alpha - 1 \geq 0$. Notice that $G_2(x,1) = 0$. We will now show that $\frac{\partial}{\partial\alpha}G_2(x,\alpha) \leq 0$ for all $\alpha\in[0,1]$ and $x\in(0,1]$, which will conclude the proof. We have
\begin{align*}
\frac{\partial}{\partial\alpha}G_2(x,\alpha) & = (1-x)(1+x)^{\alpha-1}\log(x+1)+x^\alpha\log x \\
& \leq (1-x)x^{\alpha-1}\log(x+1)+x^\alpha\log x \\
& = x^{\alpha-1}
((1-x)\log(x+1)+x\log x) \\
& \leq x^{\alpha-1}((1-x)x + x(x-1)) = 0,
\end{align*}
where in the last line we used the fact that $\log(1+x)\leq x$ for all $x>-1$.
\end{proof}


We will now layout preliminaries necessary to prove Proposition~\ref{prop:DY_plus_eta}. First, let us introduce notation, which will be used until the end of this section. For any $\delta>0$, $\lambda>0$ let
\begin{equation}\label{def:p_and_q}
\begin{split}
& p(\delta) := \pk{A(\delta)}, \quad \text{ with } \quad A(\delta) := \{Z_\alpha(-\delta) < 0,Z_\alpha(\delta) < 0\}, \text{ and} \\
& q(\delta, \lambda) := \pk{A(\delta,\lambda)}, \quad \text{ with } \quad A(\delta,\lambda) := \{Z_\alpha(-\delta) + \lambda^{-1}\eta < 0,Z_\alpha(\delta) + \lambda^{-1}\eta < 0\}.
\end{split}
\end{equation}
For any $\delta>0$ and $\lambda>0$ we define the densities of two-dimensional vectors $(Z_\alpha(-\delta), Z_\alpha(\delta))$ and $(Z_\alpha(-\delta) + \lambda^{-1}\eta, Z_\alpha(\delta)+ \lambda^{-1}\eta)$ respectively, with $\textbf{x} := \left(\begin{smallmatrix} x_1\\x_2 \end{smallmatrix}\right) \in\R^2$,
\begin{align}\label{def:unconditional_densities_f}
\begin{split}
f(\textbf{x}; \delta) & := \frac{\pk{Z_\alpha(-\delta)\in{\rm d}x_1,Z_\alpha(\delta)\in{\rm d}x_2}}{{\rm d}x_1{\rm d}x_2}, \\
g(\textbf{x}; \delta, \lambda) & := \frac{\pk{ Z_\alpha(-\delta) + \lambda^{-1}\eta \in{\rm d}x_1, Z_\alpha(\delta) + \lambda^{-1}\eta \in{\rm d}x_2}}{{\rm d}x_1{\rm d}x_2},
\end{split}
\end{align}
as well as the densities of these random vectors conditioned to take negative values on both coordinates, that is
\begin{align}\label{def:conditional_densities_f}
\begin{split}
f^-(\textbf{x}; \delta) & := \frac{\pk{ Z_\alpha(-\delta) \in{\rm d}x_1, Z_\alpha(\delta) \in{\rm d}x_2 \mid A(\delta)}}{{\rm d}x_1{\rm d}x_2},\\
g^-(\textbf{x}; \delta, \lambda) & := \frac{\pk{ Z_\alpha(-\delta) + \lambda^{-1}\eta \in{\rm d}x_1, Z_\alpha(\delta) + \lambda^{-1}\eta \in{\rm d}x_2 \mid A(\delta,\lambda)}}{{\rm d}x_1{\rm d}x_2}.
\end{split}
\end{align}
Now, let $\Sigma$ be the covariance matrix of $(Z_\alpha(-1), Z_\alpha(1))$, that is
\begin{equation}\label{eq:def_Sigma}
\Sigma := \begin{pmatrix}
\cov(Z_\alpha(-1), Z_\alpha(-1)) \  & \ \cov(Z_\alpha(-1), Z_\alpha(1)) \\
\cov(Z_\alpha(1), Z_\alpha(-1)) \ & \ \cov(Z_\alpha(1), Z_\alpha(1))
\end{pmatrix}
= \begin{pmatrix}
2 & 2-2^\alpha \\
2-2^\alpha & 2
\end{pmatrix}.
\end{equation}
By the self-similarity property of fBm, the covariance matrix $\Sigma(\delta)$ of $(Z_\alpha(-\delta), Z_\alpha(\delta))$ equals $\Sigma(\delta) = \delta^{\alpha}\Sigma$. With ${\vk 1}_2 = \left(\begin{smallmatrix} 1\\1 \end{smallmatrix}\right)$ we define:
\begin{equation}\label{def:a_b_c}
a(\textbf{x}) := \textbf{x}^\top \Sigma^{-1}\textbf{x}, \quad b(\textbf{x}) := \textbf{x}^\top\Sigma^{-1}{\vk 1}_2, \quad c := {\vk 1}_2^\top\Sigma^{-1}{\vk 1}_2,
\end{equation}
so that, with $|\Sigma|$ denoting the determinant of matrix $\Sigma$ we have
\begin{align*}
f(\textbf{x}; \delta) & = \frac{1}{2\pi|\Sigma|^{1/2}\delta^{\alpha}} \exp\left\{-\frac{(\textbf{x}+{\vk 1}_2\delta^\alpha)^\top\Sigma(\delta)^{-1}(\textbf{x}+{\vk 1}_2\delta^\alpha)}{2\delta^{\alpha}}\right\}\\
& = \frac{1}{2\pi|\Sigma|^{1/2}\delta^{\alpha}} \exp\left\{-\frac{a(\textbf{x}) + 2b(\textbf{x})\delta^{\alpha} + c\delta^{2\alpha}}{2\delta^{\alpha}}\right\}.
\end{align*}

\begin{lemma}\label{lem:q(delta,lambda)}
For any $\lambda>0$ there exist $\mathcal C_0,\mathcal C_1>0$ such that
\begin{align*}
\mathcal C_0\delta^{\alpha/2} \leq q(\delta,\lambda) \leq \mathcal C_1\delta^{\alpha/2}
\end{align*}
for all $\delta>0$ sufficiently small.
\end{lemma}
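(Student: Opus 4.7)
The intuition is that the event $A(\delta,\lambda)$ forces the exponential shift $\lambda^{-1}\eta$ to be small, of order $\delta^{\alpha/2}$, which is the standard deviation of $Z_\alpha(\pm\delta)$. The factor $\delta^{\alpha/2}$ then emerges from the fact that the density of $\eta$ at zero is of order one, so the probability that $\eta$ lies in an interval of length $\delta^{\alpha/2}$ is of order $\delta^{\alpha/2}$.

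\textbf{Step 1: self-similarity reduction.} By the self-similarity of $B_\alpha$, one has the equality in distribution
\[
\bigl(Z_\alpha(-\delta),Z_\alpha(\delta)\bigr) \;\stackrel{d}{=}\; \delta^{\alpha/2}(N_1,N_2)-(\delta^\alpha,\delta^\alpha),
\]
where $\vk N=(N_1,N_2)$ is a centered Gaussian vector with covariance matrix $\Sigma$ from \eqref{eq:def_Sigma}. Using independence of $\eta$ and conditioning on $\eta=e$, I would write
\[
q(\delta,\lambda)=\int_0^\infty e^{-e}\,\PP\!\left\{N_i<\delta^{\alpha/2}-\lambda^{-1}\delta^{-\alpha/2}e,\ i=1,2\right\}{\rm d}e,
\]
and then perform the change of variables $e=\lambda\delta^{\alpha/2}u$ to obtain
\[
q(\delta,\lambda)=\lambda\,\delta^{\alpha/2}\int_0^\infty e^{-\lambda\delta^{\alpha/2}u}\,\PP\!\left\{N_i<\delta^{\alpha/2}-u,\ i=1,2\right\}{\rm d}u.
\]
This already isolates the factor $\delta^{\alpha/2}$; it then suffices to bound the remaining integral between two positive constants for all small $\delta$.

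\textbf{Step 2: upper bound.} I would drop the exponential factor $e^{-\lambda\delta^{\alpha/2}u}\le 1$ and bound the joint probability by the marginal, $\PP\{N_1<\delta^{\alpha/2}-u\}$. After the substitution $v=\delta^{\alpha/2}-u$, the remaining integral equals $\int_{-\infty}^{\delta^{\alpha/2}}\Phi_{\sigma}(v)\,{\rm d}v$ with $\sigma=\sqrt 2$ the standard deviation of $N_1$. For $\delta\le 1$ this is bounded by $\int_{-\infty}^{1}\Phi_\sigma(v)\,{\rm d}v<\infty$ since the Gaussian tail decays faster than any polynomial. This yields $q(\delta,\lambda)\le\mathcal C_1\delta^{\alpha/2}$.

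\textbf{Step 3: lower bound.} For a matching lower bound, I would restrict to the events $\{N_1\le-1, N_2\le-1\}$ and $\{\eta\le\lambda\delta^{\alpha/2}/2\}$, which are independent and, under their intersection, ensure
\[
\delta^{\alpha/2}N_i-\delta^\alpha+\lambda^{-1}\eta\le-\delta^{\alpha/2}-\delta^\alpha+\delta^{\alpha/2}/2<0
\]
for all $\delta$ small enough. Thus
\[
q(\delta,\lambda)\ge\PP\{N_1\le-1,N_2\le-1\}\cdot\bigl(1-e^{-\lambda\delta^{\alpha/2}/2}\bigr),
\]
which is of order $\delta^{\alpha/2}$ as $\delta\to 0$ since $\PP\{N_1\le-1,N_2\le-1\}>0$ (because $\Sigma$ is nondegenerate) and $1-e^{-\lambda\delta^{\alpha/2}/2}\sim\lambda\delta^{\alpha/2}/2$.

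\textbf{Main difficulty.} There is no real obstacle here; the argument is essentially a two-sided Laplace-type estimate once self-similarity has reduced the problem to a fixed Gaussian. The only point to be checked carefully is the integrability in Step~2, which follows from the classical Gaussian tail bound. If a sharp constant were needed, one could instead invoke dominated convergence in the displayed integral to conclude that $\delta^{-\alpha/2}q(\delta,\lambda)\to\lambda\int_0^\infty\PP\{N_1<-u,N_2<-u\}\,{\rm d}u$ as $\delta\to 0$, but the two-sided bound above is all that is required for the statement.
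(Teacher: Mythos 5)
Your proof is correct and follows essentially the same route as the paper: the lower bound restricts $\eta$ to an interval of length of order $\delta^{\alpha/2}$ and the rescaled Gaussian pair to a fixed negative quadrant (exactly the paper's argument after self-similarity), and the upper bound conditions on $\eta$, rescales to expose the factor $\delta^{\alpha/2}$, and reduces to a finite one-dimensional Gaussian-cdf integral, just as the paper does by dropping one coordinate. The only difference is bookkeeping (you perform the change of variables in the $\eta$-integral rather than in the Gaussian), which does not change the substance.
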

\begin{proof}
For the lower bound see that
\begin{align*}
q(\delta, \lambda) & = \pk{\sqrt{2}B_\alpha(-\delta) -\delta^\alpha+ \lambda^{-1}\eta < 0, \sqrt{2}B_\alpha(\delta) -\delta^\alpha+ \lambda^{-1}\eta < 0)}\\
& \geq \pk{\sqrt{2}B_\alpha(-\delta) + \lambda^{-1}\eta < 0, \sqrt{2}B_\alpha(\delta) + \lambda^{-1}\eta < 0, \lambda^{-1}\eta < \delta^{\alpha/2}}\\
& \geq \pk{\sqrt{2}B_\alpha(-\delta) < -\delta^{\alpha/2}, \sqrt{2}B_\alpha(\delta) < -\delta^{\alpha/2}, \lambda^{-1}\eta < \delta^{\alpha/2}}\\
& = \pk{\sqrt{2}B_\alpha(-1) > 1, \sqrt{2}B_\alpha(1) > 1}\pk{\lambda^{-1}\eta < \delta^{\alpha/2}} \\
& = \pk{\sqrt{2}B_\alpha(-1) > 1, \sqrt{2}B_\alpha(1) > 1} \cdot \left(1-\exp\left\{-\lambda\delta^{\alpha/2}\right\}\right),
\end{align*}
which behaves like $\lambda\delta^{\alpha/2}\pk{\sqrt{2}B_\alpha(-1) > 1, \sqrt{2}B_\alpha(1) > 1}$, as $\delta\downarrow0$. For the upper bound see that
\begin{align*}
q(\delta, \lambda) & \leq \pk{Z_\alpha(\delta) + \lambda^{-1}\eta < 0} = \pk{\delta^{\alpha/2}\sqrt{2}B_\alpha(1) - \delta^\alpha + \lambda^{-1}\eta < 0} \\
& = \int_0^\infty \Phi\left(\tfrac{\delta^\alpha-z}{\sqrt{2}\delta^{\alpha/2}}\right)\lambda e^{-\lambda z}{\rm d}z \leq \sqrt{2}\delta^{\alpha/2}\lambda\int_0^\infty\Phi\left(\tfrac{\delta^{\alpha/2}}{\sqrt{2}}-z\right){\rm d}z \\
& \leq \sqrt{2}\delta^{\alpha/2}\lambda\left(\tfrac{\delta^{\alpha/2}}{\sqrt{2}} + \int_{0}^{\infty}\Psi(z){\rm d}z\right) =  \delta^{\alpha/2}\lambda\left(\delta^{\alpha/2} + \frac{\mathbb E|\mathcal N(0,1)|}{\sqrt{2}}\right),
\end{align*}
where $\Phi(\cdot), \Psi(\cdot)$ are the cdf and complementary cdf of the standard normal distribution, respectively. This concludes the proof.
\end{proof}

In the following lemma, we establish the formulas for $f^-$ and $g^-$; most notably, we show that $g^-$ can be upper bounded by $f^-$ uniformly in $\delta$, up to a positive constant.

\begin{lemma}\label{lem:conditional_densities}
For any $\lambda>0$,
\begin{itemize}
\item[(i)] $\displaystyle f^-({\rm\bf x}; \delta) = p(\delta)^{-1} f({\rm\bf x};\delta)$;
\item[(ii)] $\displaystyle g^-({\rm\bf x}; \delta, \lambda) = q(\delta, \lambda)^{-1}f({\rm\bf x};\delta) \int_0^\infty \lambda\exp\left\{-\frac{cz^2  + 2z((\lambda-c)\delta^{\alpha} - b({\rm\bf x}))}{2\delta^{\alpha}}\right\}{\rm d}z$;
\item[(iii)] there exists $C>0$ depending only on $\lambda$, such that for all $\delta$ small enough:\\ $\displaystyle g^-({\rm\bf x}; \delta,\lambda) \leq C f^-({\rm\bf x};\delta)$, for all ${\rm\bf x}\leq 0$;
\end{itemize}
\end{lemma}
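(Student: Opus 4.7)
The plan addresses the three claims in turn. Part (i) is immediate from Bayes' rule: for $\mathbf{x}$ with $x_1 < 0$ and $x_2 < 0$, the infinitesimal event $\{Z_\alpha(-\delta) \in {\rm d}x_1, Z_\alpha(\delta) \in {\rm d}x_2\}$ is contained in $A(\delta)$, so $f^-(\mathbf{x}; \delta) = f(\mathbf{x}; \delta)/\pk{A(\delta)} = f(\mathbf{x}; \delta)/p(\delta)$.

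For part (ii), I would write $g$ as a convolution of $f$ with the $\mathrm{Exp}(\lambda)$ density of $\lambda^{-1}\eta$, namely
\begin{equation*}
g(\mathbf{x}; \delta, \lambda) = \int_0^\infty f(\mathbf{x} - w\mathbf{1}_2; \delta)\, \lambda e^{-\lambda w}\, {\rm d}w.
\end{equation*}
The key algebraic input is the pair of bilinear identities $a(\mathbf{x} - w\mathbf{1}_2) = a(\mathbf{x}) - 2w\, b(\mathbf{x}) + w^2 c$ and $b(\mathbf{x} - w\mathbf{1}_2) = b(\mathbf{x}) - wc$, which follow directly from the definitions in \eqref{def:a_b_c}. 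Substituting these into the explicit Gaussian formula for $f(\cdot; \delta)$ allows one to factor $f(\mathbf{x}; \delta)$ outside the integral; combining the residual $w$-dependent quadratic exponent with the exponential weight $-\lambda w$ yields exactly the integrand in the statement. Dividing by $q(\delta, \lambda)$ produces $g^-$.

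For part (iii), dividing the identity from (ii) by the one from (i) gives
\begin{equation*}
\frac{g^-(\mathbf{x}; \delta, \lambda)}{f^-(\mathbf{x}; \delta)} = \frac{p(\delta)}{q(\delta, \lambda)} \int_0^\infty \lambda \exp\left\{-\frac{w^2 c + 2w((\lambda-c)\delta^\alpha - b(\mathbf{x}))}{2\delta^\alpha}\right\} {\rm d}w.
\end{equation*}
Since $p(\delta) \le 1$ and $q(\delta, \lambda) \ge \mathcal{C}_0 \delta^{\alpha/2}$ for small $\delta$ by Lemma~\ref{lem:q(delta,lambda)}, it suffices to show that the above integral is at most $C\delta^{\alpha/2}$ uniformly in $\mathbf{x}\le 0$. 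Rescaling $w = \delta^{\alpha/2} u$ rewrites the integral as $\delta^{\alpha/2} \lambda \int_0^\infty \exp\{-cu^2/2 + Au\}\,{\rm d}u$ with $A := b(\mathbf{x})/\delta^{\alpha/2} + (c - \lambda)\delta^{\alpha/2}$. A direct computation from \eqref{eq:def_Sigma} gives $\Sigma^{-1}\mathbf{1}_2 = (2^\alpha/|\Sigma|)\mathbf{1}_2$, so $b(\mathbf{x}) = (2^\alpha/|\Sigma|)(x_1 + x_2) \le 0$ whenever $\mathbf{x} \le 0$; hence $A \le A_{\max} := \max\{0,\, c - \lambda\}$ uniformly in $\delta \in (0,1]$ and $\mathbf{x} \le 0$. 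Since $u \ge 0$, the monotonicity $Au \le A_{\max} u$ then bounds the $u$-integral by the finite constant $\int_0^\infty \exp\{-cu^2/2 + A_{\max} u\}{\rm d}u$, depending only on $\lambda$ and $\alpha$.

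The only genuine subtlety is this last step: a priori $b(\mathbf{x})/\delta^{\alpha/2}$ is unbounded as $\mathbf{x}$ ranges over the negative quadrant, which would seem to threaten a uniform bound. The resolution is that this unboundedness occurs only in the \emph{negative} direction, which accelerates the Gaussian decay of the integrand rather than obstructing it; the hypothesis $\mathbf{x}\le 0$ is precisely what supplies the favourable sign of $b(\mathbf{x})$ and makes the argument close.
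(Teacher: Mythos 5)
Your proposal is correct and follows essentially the same route as the paper: part (ii) via the convolution of $f$ with the $\mathrm{Exp}(\lambda)$ density and the bilinear expansion of the quadratic form, and part (iii) via the sign of $b({\rm\bf x})$ (from $\Sigma^{-1}{\vk 1}_2\geq \vk 0$) together with the lower bound $q(\delta,\lambda)\geq \mathcal C_0\delta^{\alpha/2}$ of Lemma~\ref{lem:q(delta,lambda)}. Your explicit rescaling $w=\delta^{\alpha/2}u$ in (iii) is a slightly cleaner way of extracting the $\delta^{\alpha/2}$ factor than the paper's presentation, but it is the same argument in substance.
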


\begin{proof}
Part (i) follows directly from the definition. For part (ii),
for $\textbf{x}\leq0$ we have
\begin{align*}
g^-(\textbf{x};\delta,\lambda) & = q(\delta, \lambda)^{-1}\int_0^\infty f(\textbf{x}-{\vk 1}_2z; \delta)\cdot \lambda{\rm e}^{-\lambda z} {\rm d}z \\
& = \int_0^\infty\frac{q(\delta, \lambda)^{-1}}{2\pi|\Sigma|\delta^{\alpha}}\exp\left\{-\frac{(\textbf{x}+{\vk 1}_2(\delta^\alpha-z))^\top\Sigma^{-1}(\textbf{x}+{\vk 1}_2(\delta^\alpha-z))}{2\delta^{\alpha}}\right\}\cdot \lambda{\rm e}^{-\lambda z} {\rm d}z \\
& = \int_0^\infty\frac{\lambda q(\delta, \lambda)^{-1}}{2\pi|\Sigma|\delta^{\alpha}}\exp\left\{-\frac{a(\textbf{x}) + 2b(\textbf{x})(\delta^\alpha-z) + c(\delta^{2\alpha}-2\delta^\alpha z + z^2) + 2\lambda\delta^\alpha z}{2\delta^{\alpha}}\right\} {\rm d}z \\
& = q(\delta, \lambda)^{-1}f(\textbf{x}; \delta)\int_0^\infty \lambda \exp\left\{-\frac{cz^2 + 2z((\lambda-c)\delta^\alpha - b(\textbf{x}))}{2\delta^{\alpha}}\right\} {\rm d}z.
\end{align*}
For part (iii), we have $\Sigma^{-1} = \big(2^\alpha(4-2^\alpha)\big)^{-1} \cdot \left(\begin{smallmatrix}2 & 2^\alpha-2\\ 2^\alpha-2 & 2 \end{smallmatrix}\right)$, thus $\Sigma^{-1}{\vk 1}_2 \geq \textbf{0}$ element-wise. It then follows that $b(\textbf{x}) \leq 0$ when $\textbf{x}\leq0$, which yields the following upper bound
\begin{equation*}
g^-(\textbf{x};\delta,\lambda) \leq q(\delta, \lambda)^{-1}f(\textbf{x}; \delta)\int_0^\infty \lambda \exp\left\{-\frac{cz^2 + 2z(\lambda-c)\delta^\alpha}{2\delta^{\alpha}}\right\} {\rm d}z.
\end{equation*}
Now, it is easy to see that for all $\delta>0$ small enough
\begin{align*}
\int_0^\infty \lambda \exp\left\{-\frac{cz^2 + 2z(\lambda-c)\delta^\alpha}{2\delta^{\alpha}}\right\} {\rm d}z \leq \int_0^\infty \lambda \exp\left\{-cz^2 + 2z(\lambda-c)\right\} {\rm d}z < \infty,
\end{align*}
hence, using part~(i), there exists $\mathcal C>0$ such that
\begin{equation*}
g^-(\textbf{x};\delta,\lambda) \leq \mathcal C \delta^{\alpha/2}q(\delta, \lambda)^{-1}p(\delta)^{-1}f^-(\textbf{x}; \delta).
\end{equation*}
The proof is concluded by noting that $p(\delta) \to \pk{B_\alpha(-1)<0, B_\alpha(1)<0} > 0$ and $\delta^{\alpha/2}q(\delta,\lambda)^{-1} = O(1)$, due to Lemma~\ref{lem:q(delta,lambda)}.
\end{proof}

Recall the definition of $\Sigma$ in Eq.~\eqref{eq:def_Sigma}. In what follows, we put, for $k\in\Z$
\begin{equation}\label{def:c-_c+}
\begin{split}
{\small\begin{pmatrix}
c^-(k) \\
c^+(k)
\end{pmatrix}} & := \Sigma^{-1} \cdot {\small\begin{pmatrix}
\cov(Z_\alpha(k), Z_\alpha(-1))\\
\cov(Z_\alpha(k), Z_\alpha(1))
\end{pmatrix}}\\
& = \frac{1}{2^\alpha(4-2^\alpha)} \cdot {\small\begin{pmatrix}2 & 2^\alpha-2\\ 2^\alpha-2 & 2 \end{pmatrix}} \cdot {\small\begin{pmatrix}
k^\alpha + 1 - (k + {\rm sgn}(k))^\alpha\\
k^\alpha + 1 - (k - {\rm sgn}(k))^\alpha
\end{pmatrix}}.
\end{split}
\end{equation}

\begin{lemma}\label{lem:properties_of_c} For $k\in\Z\setminus\{0\}$,
\begin{itemize}
\item[(i)] $\displaystyle (2-2^{\alpha-1})^{-1} < c^-(k) + c^+(k) \leq 1$ when $\alpha\in(0,1)$;
\item[(ii)] $1 \leq c^-(k) + c^+(k) \leq (2-2^{\alpha-1})^{-1}$ when $\alpha\in(1,2)$;
\end{itemize}
\end{lemma}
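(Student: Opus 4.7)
The plan is to reduce everything to a one-dimensional inequality. First, time-reversal symmetry of $B_\alpha$, together with the fact that $\Sigma$ commutes with the swap matrix $\left(\begin{smallmatrix}0&1\\1&0\end{smallmatrix}\right)$, shows that reversing the sign of $k$ merely permutes $c^-(k)$ and $c^+(k)$; hence the sum depends on $k$ only through $n:=|k|$ and we may assume $k=n\geq 1$. A direct computation from the explicit form of $\Sigma^{-1}$ given in Eq.~\eqref{def:c-_c+} yields ${\vk 1}_2^\top\Sigma^{-1}=(4-2^\alpha)^{-1}(1,1)$, and summing the two coordinates of Eq.~\eqref{def:c-_c+} gives
\begin{equation*}
c^-(k)+c^+(k)\;=\;\frac{F(n)}{4-2^\alpha},\qquad F(n):=2n^\alpha+2-(n+1)^\alpha-(n-1)^\alpha.
\end{equation*}
Since $(2-2^{\alpha-1})^{-1}=2/(4-2^\alpha)$, proving (i) and (ii) is equivalent to showing that $F(n)\in(2,\,4-2^\alpha]$ for $\alpha\in(0,1)$ and $F(n)\in[4-2^\alpha,\,2)$ for $\alpha\in(1,2)$, at every integer $n\geq 1$.

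I would then analyze $F$ through the discrete second difference $g(n):=2-F(n)=(n+1)^\alpha+(n-1)^\alpha-2n^\alpha$ of $f(x):=x^\alpha$. The sign of $g(n)$ is immediate from concavity or convexity of $f$ on $[0,\infty)$: strict concavity for $\alpha\in(0,1)$ yields $g(n)<0$, while strict convexity for $\alpha\in(1,2)$ yields $g(n)>0$, for every $n\geq 1$. For monotonicity in $n$, I would use the integral representation
\begin{equation*}
g(n)\;=\;\int_{n-1}^{n+1}\bigl(1-|t-n|\bigr)\,f''(t)\,dt,
\end{equation*}
obtained by applying the fundamental theorem of calculus twice to $g(n)=[f(n+1)-f(n)]-[f(n)-f(n-1)]$. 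As $n$ increases the integration window slides to the right, and since $f''(t)=\alpha(\alpha-1)t^{\alpha-2}$ is strictly monotone on $(0,\infty)$ (with $|f''|$ decreasing toward $0$), $g$ inherits the corresponding monotonicity: strictly increasing when $\alpha\in(0,1)$ and strictly decreasing when $\alpha\in(1,2)$.

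Combining the two observations finishes the argument. For $\alpha\in(0,1)$, $g$ is negative and strictly increasing on $[1,\infty)$, so $g(n)\in[g(1),\,0)=[2^\alpha-2,\,0)$, giving $F(n)\in(2,\,4-2^\alpha]$ and hence (i); for $\alpha\in(1,2)$, $g$ is positive and strictly decreasing, so $g(n)\in(0,\,2^\alpha-2]$, giving $F(n)\in[4-2^\alpha,\,2)$ and hence (ii). Dividing by $4-2^\alpha>0$ delivers the claimed bounds, with equality in the attained endpoint achieved at $n=1$. The only non-mechanical step is the monotonicity of the weighted integral, which follows directly from that of $f''$; I do not foresee any serious obstacle beyond careful separation of the two $\alpha$-ranges.
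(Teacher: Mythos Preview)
Your proof is correct and arrives at the same reduction as the paper: both compute
\[
c^-(k)+c^+(k)=\frac{2+2|k|^\alpha-(|k|-1)^\alpha-(|k|+1)^\alpha}{4-2^\alpha},
\]
evaluate at $|k|=1$ to get one endpoint, and identify $(2-2^{\alpha-1})^{-1}$ as the limit at infinity. The difference lies in how monotonicity of the numerator in $|k|$ is established. The paper differentiates $f(x)=2x^\alpha-(x-1)^\alpha-(x+1)^\alpha$ directly, expands $(1\pm 1/x)^{\alpha-1}$ as a binomial series, and reads off $\mathrm{sgn}\,f'(x)=\mathrm{sgn}(\alpha-1)$ from the sign of the coefficients. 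You instead view $g(n)=(n+1)^\alpha+(n-1)^\alpha-2n^\alpha$ as a discrete second difference, getting its sign for free from the concavity/convexity of $x\mapsto x^\alpha$, and obtain monotonicity from the representation $g(n)=\int_{-1}^{1}(1-|u|)f''(n+u)\,du$ together with the strict monotonicity of $f''(t)=\alpha(\alpha-1)t^{\alpha-2}$. Your route is more conceptual and avoids any series manipulation; the paper's route is more computational but entirely self-contained. One small point worth making explicit in your write-up is the integrability at $n=1$, where the window touches $t=0$ and $f''$ blows up: the weight $1-|t-1|\sim t$ there makes the integrand $\sim t^{\alpha-1}$, which is integrable, so the comparison $g(2)-g(1)$ is still valid.
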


\begin{proof}
After some algebraic transformations, from \eqref{def:c-_c+}, we find that
\begin{align*}
c^-(k) + c^+(k) = \frac{2 + 2|k|^\alpha -
(|k|-1)^\alpha -(|k|+1)^\alpha}{4-2^\alpha}, \quad k\in\Z\setminus\{0\}
\end{align*}
Let $f(x)  = 2x^{\alpha}-(x-1)^\alpha-(x+1)^\alpha$.
We have for $x>1$
\bqny{
f'(x) &=& \alpha x^{\alpha-1}\left(2-(1-1/x)^{\alpha-1}-(1+1/x)^{\alpha-1} \right)
\\&=&
\alpha x^{\alpha-1}\left[ 2 - \sum_{n=0}^\infty (-1)^n \frac{(\alpha-1)\cdots(\alpha-n)}{n!}x^{-n} - \sum_{n=0}^\infty \frac{(\alpha-1)\cdots(\alpha-n)}{n!}x^{-n}\right]
\\&=&
\alpha x^{\alpha-3}(\alpha-1)(2-\alpha)\left[\frac{1}{2!} +  \sum_{n=1}^\infty \frac{(\alpha-3)\cdots(\alpha-2(n+1))}{(2(n+1))!}x^{-2n}\right].
}
We see that each of the terms of the sum above is positive, so ${\rm sgn}(f'(x)) = {\rm sgn}(\alpha-1)$. Thus, $f'(x)$ is negative for $\alpha\in (0,1)$ and positive for $\alpha\in(1,2)$. Finally, since
\bqny{
 \lim_{x\to \infty} \frac{2 + 2x^\alpha -
(x-1)^\alpha -(x+1)^\alpha}{4-2^\alpha} = \frac{2}{4-2^{\alpha}}
= (2-2^{\alpha-1})^{-1}
}
and $c^-(1) + c^+(1) =1$, the claim follows.
\end{proof}

We are now ready to prove Proposition~\ref{prop:DY_plus_eta}. In what follows, for any $\delta>0$ and $t\in\R$ let
\begin{align}
\label{def:Y}Y^\delta_\alpha(t) & := Z_\alpha(t) - \E{Z_\alpha(t) \mid (Z_\alpha(-\delta),Z_\alpha(\delta))} \\
\nonumber& = Z_\alpha(t) - \big(c^-(k)Z_\alpha(-\delta) + c^+(k)Z_\alpha(\delta)\big).
\end{align}
It is a well known fact that $\{Y_\alpha^\delta(t), t\in\R\}$ is independent of $(Z_\alpha(-\delta),Z_\alpha(\delta))$.

\begin{proof}[Proof of Proposition~\ref{prop:DY_plus_eta}]
Recall the definition of the events $A(\delta,\lambda)$ and $A(\delta)$ in  \eqref{def:p_and_q}. We have
\begin{align*}
& \pk{\sup_{t\in\delta\Z\setminus\{0\}} Z_\alpha(t) + \eta \leq 0}\\
& \qquad\qquad = \pk{\sup_{k\in\Z\setminus\{-1,0,1\}} Y^\delta_\alpha(\delta k) + \Big(c^-(k)Z_\alpha(-\delta) + c^+(k)Z_\alpha(\delta)\Big) + \eta < 0, A(\delta, 1)},
\end{align*}
with $Y_\alpha(t)$ defined in \eqref{def:Y}. Let $\lambda^* := 1$ when $\alpha\in(0,1]$, and $\lambda^* := (2-2^{\alpha-1})^{-1}$ when $\alpha\in[1,2)$. Due to Lemma~\ref{lem:properties_of_c} we have $\lambda^*\geq1$ and $c^-(k)+c^+(k)\leq \lambda^*$, thus $A(\delta,1) \subseteq A(\delta,\lambda^*)$  and the display above is upper bounded by
\begin{align*}
& \mathbb P \Bigg\{A(\delta,\lambda^*) < 0, \sup_{k\in\Z\setminus\{-1,0,1\}} Y^\delta_\alpha(\delta k) + \Big(c^-(k)\big(Z_\alpha(-\delta)+\tfrac{\eta}{\lambda^*}\big) + c^+(k)\big(Z_\alpha(\delta) + \tfrac{\eta}{\lambda^*}\big)\Big) < 0\Bigg\}\\
& = q(\delta,\lambda^*) \cdot \mathbb P \Bigg\{\sup_{k\in\Z\setminus\{-1,0,1\}} Y^\delta_\alpha(\delta k) + \Big(c^-(k)\big(Z_\alpha(-\delta)+\tfrac{\eta}{\lambda^*}\big) + c^+(k)\big(Z_\alpha(\delta) + \tfrac{\eta}{\lambda^*}\big)\Big) < 0 \,\Big\vert\, A(\delta,\lambda^*)\Bigg\} \\
& = q(\delta,\lambda^*) \cdot \int \mathbb P \Bigg\{\sup_{k\in\Z\setminus\{-1,0,1\}} Y^\delta_\alpha(\delta k) + \Big(c^-(k)x_1 + c^+(k)x_2\Big) < 0\Bigg\}g^-(\textbf{x}; \delta,\lambda^*){\rm d}\textbf{x},
\end{align*}
with $g^-$ defined in \eqref{def:conditional_densities_f}. Now, using Lemma~\ref{lem:conditional_densities}(iii) we know that there exists $\mathcal C>0$ such that for all $\delta>0$ small enough, the expression above is upper bounded by
\begin{align*}
& q(\delta,\lambda^*) \cdot \int \mathbb P \Bigg\{\sup_{k\in\Z\setminus\{-1,0,1\}} Y^\delta_\alpha(\delta k) + \Big(c^-(k)x_1 + c^+(k)x_2\Big) < 0\Bigg\}\mathcal Cf^-(\textbf{x}; \delta){\rm d}\textbf{x} \\
& = \mathcal C q(\delta,\lambda^*)\mathbb P \Bigg\{\sup_{k\in\Z\setminus\{-1,0,1\}} Y^\delta_\alpha(\delta k) + \Big(c^-(k)Z_\alpha(-\delta) + c^+(k)Z_\alpha(\delta)\Big) < 0 \,\Big\vert\, A(\delta) \Bigg\} \\
& = \mathcal C q(\delta,\lambda^*)p(\delta)^{-1} \mathbb P \Bigg\{A(\delta), \sup_{k\in\Z\setminus\{-1,0,1\}} Y^\delta_\alpha(\delta k) + \Big(c^-(k)Z_\alpha(-\delta) + c^+(k)Z_\alpha(\delta)\Big) < 0\Bigg\} \\
& = \mathcal C q(\delta,\lambda^*)p(\delta)^{-1} \pk{\sup_{t\in\delta\Z\setminus\{0\}} Z_\alpha(t) \leq 0}.
\end{align*}
Finally, we have $\pk{\sup_{t\in\delta\Z\setminus\{0\}} Z_\alpha(t) \leq 0} \sim \delta^{-1}\mathcal H_\alpha$, see \cite[Proposition~4]{MR3217455}. This completes the proof due to Lemma~\ref{lem:q(delta,lambda)} and the fact that $p(\delta) \to \pk{B_\alpha(-1)<0, B_\alpha(1)<0}>0$.
\end{proof}

\subsection{Proof of Theorem~\ref{thm:main}, case $\alpha\in (1,2)$}

The following lemma provides a crucial bound for
$\mathcal H_{\alpha} - \mathcal H_{\alpha}^{\delta}$.

\begin{lemma} \label{lemma_sub_additivity} For sufficiently small
 $\delta>0$ it holds, that
\bqny{
\mathcal H_{\alpha} - \mathcal H_{\alpha}^{\delta}
\le 2\E{\sup_{t \in [0,1]}e^{Z_\alpha(t)}-
\sup_{t \in [0,1]_\delta}e^{Z_\alpha(t)}
}.}
\end{lemma}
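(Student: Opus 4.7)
The proof proceeds via a sub-additivity argument for the discretization-error function
\[\phi_\delta(S) := \E{\sup_{t\in[0,S]}e^{Z_\alpha(t)} - \sup_{t\in[0,S]_\delta}e^{Z_\alpha(t)}}.\]
By definition of the Pickands constants, $\mathcal{H}_\alpha - \mathcal{H}_\alpha^\delta = \lim_{S\to\infty}\phi_\delta(S)/S$. The central step is to establish the one-sided sub-additivity
\[\phi_\delta(S+T) \leq \phi_\delta(S) + \phi_\delta(T) \quad \text{whenever } S \in \delta\Z, \, T \geq 0.\]

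To prove this, I would split $[0, S+T] = [0, S] \cup [S, S+T]$ and apply the elementary inequality $\max(a_1, a_2) - \max(b_1, b_2) \leq (a_1 - b_1) + (a_2 - b_2)$, valid for $a_i \geq b_i$, obtaining
\[\phi_\delta(S+T) \leq \phi_\delta(S) + \E{\sup_{t\in[S,S+T]}e^{Z_\alpha(t)} - \sup_{t\in[S,S+T]_\delta}e^{Z_\alpha(t)}}.\]
The remaining expectation is reduced to $\phi_\delta(T)$ by a Cameron--Martin change of measure using the mean-one martingale $e^{\sqrt 2 B_\alpha(S) - S^\alpha}$. After substituting $t = S+s$ and factoring out this martingale, the tilted measure puts a drift of $\big((S+s)^\alpha - s^\alpha - S^\alpha\big)/\sqrt 2$ on the increment $W(s) := B_\alpha(S+s) - B_\alpha(S)$, which exactly cancels the time-dependent excess drift $-((S+s)^\alpha - S^\alpha)$ inherited from the exponent. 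The resulting process equals $Z_\alpha(s)$ in law on $[0, T]$. The hypothesis $S \in \delta\Z$ is used only to align the shifted grid: $[S, S+T]_\delta - S = [0, T]_\delta$.

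Applying Fekete's lemma along the semigroup $\delta\N$ yields $\mathcal H_\alpha - \mathcal H_\alpha^\delta = \inf_{N\geq 1}\phi_\delta(N\delta)/(N\delta)$. Choosing $N_0 := \lceil 1/\delta\rceil$, we have $N_0\delta \in [1, 1+\delta]$, so
\[\mathcal H_\alpha - \mathcal H_\alpha^\delta \leq \phi_\delta(N_0\delta)/(N_0\delta) \leq \phi_\delta(N_0\delta).\]
A further application of sub-additivity to $N_0\delta = (N_0 - 1)\delta + \delta$ gives $\phi_\delta(N_0\delta) \leq \phi_\delta((N_0 - 1)\delta) + \phi_\delta(\delta)$.

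I would finish by showing each of these terms is at most $\phi_\delta(1)$ for sufficiently small $\delta$. For the first: in the generic case $1 \notin \delta\Z$, the grids $[0, (N_0-1)\delta]_\delta$ and $[0, 1]_\delta$ both equal $\{0, \delta, \ldots, (N_0-1)\delta\}$, so the discrete suprema agree, while $[0, (N_0-1)\delta] \subseteq [0, 1]$ gives the continuous supremum the correct inequality; thus $\phi_\delta((N_0-1)\delta) \leq \phi_\delta(1)$ directly. For the second: path continuity of fBm implies $\phi_\delta(\delta) \to 0$ as $\delta \downarrow 0$, so $\phi_\delta(\delta) \leq \phi_\delta(1)$ for all sufficiently small $\delta$. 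Combining yields $\phi_\delta(N_0\delta) \leq 2\phi_\delta(1)$, and hence the claim. The main technical obstacle is the Cameron--Martin computation, which requires carefully tracking the Gaussian mean shift through the joint Gaussian structure of $\{B_\alpha(t)\}_{t\geq S}$ and $B_\alpha(S)$, and verifying its exact cancellation with the fractional-power drift $(S+s)^\alpha - S^\alpha$.
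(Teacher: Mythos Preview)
Your overall strategy---proving sub-additivity of $\phi_\delta$ on the lattice $\delta\N$ via a Gaussian change of measure and then invoking Fekete's lemma---is precisely the mechanism behind the result the paper imports from \cite{continuity_pick_const}. The exponential tilt by $e^{\sqrt 2 B_\alpha(S)-S^\alpha}$ does produce the cancellation you describe: under the tilted law, $W(s)=B_\alpha(S+s)-B_\alpha(S)$ has the covariance of $B_\alpha$ (stationary increments) and acquires mean $\big((S+s)^\alpha-s^\alpha-S^\alpha\big)/\sqrt 2$, which combines with the inherited drift $-((S+s)^\alpha-S^\alpha)$ to leave exactly $-s^\alpha$, so in fact $\E{\sup_{[S,S+T]}e^{Z_\alpha}-\sup_{[S,S+T]_\delta}e^{Z_\alpha}}=\phi_\delta(T)$ holds as an \emph{equality}. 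One terminological point: for $\alpha\neq1$ the density $e^{\sqrt 2 B_\alpha(S)-S^\alpha}$ is not a martingale in $S$; it is merely a mean-one positive random variable, which is all the Gaussian (Cameron--Martin) shift requires.

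There is, however, a genuine slip at the end. You deduce $\phi_\delta(\delta)\leq\phi_\delta(1)$ from $\phi_\delta(\delta)\to0$, but $\phi_\delta(1)\to0$ as well (by path continuity and dominated convergence), so the implication does not follow. The clean fix---and this is what the paper does---is to take $N_0=\lfloor 1/\delta\rfloor$ rather than $\lceil1/\delta\rceil$. Then $c_\delta:=N_0\delta\in(1-\delta,1]$, so $c_\delta^{-1}\leq2$ for $\delta\leq1/2$; moreover $[0,c_\delta]_\delta=[0,1]_\delta$ (since $c_\delta$ is the largest grid point not exceeding $1$) while $[0,c_\delta]\subseteq[0,1]$, giving $\phi_\delta(c_\delta)\leq\phi_\delta(1)$ directly. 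No splitting and no extra $\phi_\delta(\delta)$ term is needed.
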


\begin{proof}[Proof of Lemma~\ref{lemma_sub_additivity}]
As follows from the proof of \citep[Theorem~1]{continuity_pick_const},
the first equation on p.~12 with
$c_\delta := [1/\delta]\delta$, where $[\cdot]$ is the
integer part of a real number, it holds, that
\bqny{
\mathcal H_{\alpha} - \mathcal H_{\alpha}^{\delta} &\le& c_\delta^{-1}
\E{\sup_{t\in [0,c_\delta]}e^{Z_\alpha(t)}
-\sup_{t\in [0,c_\delta]_\delta}e^{Z_\alpha(t)}}
\\&\le&
2\E{\sup_{t\in [0,c_\delta]}e^{Z_\alpha(t)}
-\sup_{t\in [0,c_\delta]_\delta}e^{Z_\alpha(t)}}
\le
2\E{\sup_{t\in [0,1]}e^{Z_\alpha(t)}
-\sup_{t\in [0,1]_\delta}e^{Z_\alpha(t)}},
}
which completes the proof.
\end{proof}

Now we are ready to prove Theorem~\ref{thm:main}(ii).
\begin{proof}[Proof of Theorem~\ref{thm:main}, $\alpha\in (1,2)$]
Note that for any $ y\le x$ it holds, that $e^x-e^y\le (x-y)e^x$.
Implementing this inequality we find that, for $s,t\in[0,1]$,
\bqny{
\left|e^{Z_\alpha(t)}-e^{Z_\alpha(s)}\right| &\le& e^{\max(Z_\alpha(t),Z_\alpha(s))}
|Z_\alpha(t)-Z_\alpha(s)|
\\&\le&
e^{\sqrt 2\max\limits_{w\in [0,1]}B_\alpha(w)}\left|
\sqrt 2(B_\alpha(t)-B_\alpha(s))-(t^\alpha-s^\alpha)\right|.
}
Next, by Lemma~\ref{lemma_sub_additivity} we have
\begin{align*}
\mathcal H_\alpha - \mathcal H_\alpha^\delta & \leq 2\E{\sup\limits_{t,s\in [0,1],|t-s|\le\delta}\left|e^{Z_\alpha(t)}-e^{Z_\alpha(s)}\right|} \\
& \leq 2\sqrt{2}\E{e^{\sqrt 2\max\limits_{w\in [0,1]}B_\alpha(w)}\sup_{t,s\in [0,1],|t-s|\le\delta}|B_\alpha(t)-B_\alpha(s)|}\\
& \quad + 2\E{e^{\sqrt 2\max\limits_{w\in [0,1]}B_\alpha(w)}
\sup\limits_{t,s\in [0,1],|t-s|\le\delta}|t^\alpha-s^\alpha|}.
\end{align*}
Clearly, the second term is upper-bounded by $\mathcal C_1 \delta$ for all $\delta$ small enough. Using H\"{o}lder inequality, the first term can be bounded by
\begin{equation*}
2\sqrt{2}\E{e^{2\sqrt 2\max\limits_{w\in [0,1]}B_\alpha(w)}}^{1/2}
\E{\left(
\sup\limits_{t,s\in [0,1],|t-s|\le\delta}(B_\alpha(t)-B_\alpha(s))
\right)^2}^{1/2}.
\end{equation*}
The first expectation is finite. The random variable inside the second expectation is called the \emph{uniform modulus of continuity}. From \cite[Theorem~4.2, p.~164]{Minicourse} it follows that there exists $\mathcal C>0$ such that
\begin{equation*}
\E{\left(
\sup\limits_{t,s\in [0,1],|t-s|\le\delta}(B_\alpha(t)-B_\alpha(s))
\right)^2}^{1/2} \leq \mathcal C\delta^{\alpha/2}|\log(\delta)|^{1/2}.
\end{equation*}
This concludes the proof.
\end{proof}

\begin{remark}
Note that the proofs of Lemma~\ref{lemma_sub_additivity} and
Theorem~\ref{thm:main} work also in case $\alpha\in(0,1]$.
\end{remark}

\subsection{Proof of Theorem~\ref{thm:main}, case $\alpha\in\{1,2\}$}

\begin{proof}[Proof of Theorem~\ref{thm:main}, $\alpha=1$]
In the following $\phi$, $\Psi$ stand for the pdf and the survival function of a standard Gaussian random variable and $$v(\eta) := \eta\exp(2\sum_{k=1}^\infty \frac{\Psi(\sqrt {\eta k/2})}{k}), \qquad \eta>0.$$
Before giving the proof we formulate and prove the following
auxiliary lemma

\begin{lemma}\label{lemma_v(eta)_derivative}
It holds, that for any $\eta>0$
\bqny{
v'(\eta)
 = \exp\left(2\sum_{k=1}^\infty \frac{\Psi(\sqrt{\frac{\eta k}{2}})}{k}
 \right)
\left(1-\frac{\sqrt \eta}{2\sqrt \pi}
\sum_{k=1}^\infty \frac{e^{-\frac{\eta k}{4}}}{\sqrt k}\right)
.}
\end{lemma}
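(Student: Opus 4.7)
The identity is a direct computation; the only real subtlety is justifying termwise differentiation of the infinite series. My plan is to write $v(\eta) = \eta F(\eta)$ with
$$F(\eta) := \exp\!\Big( G(\eta)\Big), \qquad G(\eta) := 2\sum_{k=1}^{\infty}\frac{\Psi(\sqrt{\eta k/2})}{k},$$
apply the product rule $v'(\eta) = F(\eta) + \eta F(\eta) G'(\eta)$, and show
$$G'(\eta) = -\frac{1}{2\sqrt{\pi\eta}}\sum_{k=1}^{\infty}\frac{e^{-\eta k/4}}{\sqrt{k}}.$$
Factoring out $F(\eta)$ and simplifying $\eta\cdot\tfrac{1}{2\sqrt{\pi\eta}} = \tfrac{\sqrt{\eta}}{2\sqrt{\pi}}$ yields exactly the claimed formula.

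\textbf{Termwise differentiation.} For each $k\ge1$, using $\Psi'(x) = -\phi(x) = -\tfrac{1}{\sqrt{2\pi}}e^{-x^2/2}$ and the chain rule with $\tfrac{d}{d\eta}\sqrt{\eta k/2} = \tfrac{\sqrt{k}}{2\sqrt{2\eta}}$, I get
$$\frac{d}{d\eta}\Psi\!\big(\sqrt{\eta k/2}\big) = -\frac{1}{\sqrt{2\pi}} e^{-\eta k/4}\cdot \frac{\sqrt{k}}{2\sqrt{2\eta}} = -\frac{\sqrt{k}}{4\sqrt{\pi\eta}}\, e^{-\eta k/4}.$$
Dividing by $k$ and multiplying by $2$ gives the $k$-th term $-\tfrac{1}{2\sqrt{\pi\eta}}\cdot\tfrac{e^{-\eta k/4}}{\sqrt{k}}$, which is what I want after summation.

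\textbf{Justifying the interchange.} To pass $\tfrac{d}{d\eta}$ inside the sum I will check uniform convergence of the differentiated series on every compact interval $[a,b]\subset(0,\infty)$. For $\eta\in[a,b]$ the $k$-th term is bounded in absolute value by $\tfrac{1}{2\sqrt{\pi a}}\cdot \tfrac{e^{-ak/4}}{\sqrt{k}}$, and $\sum_{k\ge1}\tfrac{e^{-ak/4}}{\sqrt{k}}<\infty$, so by the Weierstrass $M$-test the series of derivatives converges uniformly on $[a,b]$. Since the original series $G(\eta)$ converges at (say) $\eta=(a+b)/2$ (using $\Psi(x)\le e^{-x^2/2}$ for $x>0$, which makes $\Psi(\sqrt{\eta k/2})/k$ exponentially summable whenever $\eta>0$), the standard theorem on termwise differentiation applies and gives $G'(\eta)$ as stated.

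\textbf{Main obstacle.} Honestly there is no conceptual difficulty here; the computation is routine calculus. The only point that needs attention is that the pointwise bound $e^{-\eta k/4}$ blows up as $\eta\downarrow 0$, which is why the lemma is stated only for $\eta>0$ and why I localize to compact subsets of $(0,\infty)$ before invoking the $M$-test. Once that is in place, assembling the product-rule expression and simplifying the $\sqrt{\eta}$ factor produces the claimed identity directly.
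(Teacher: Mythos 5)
Your proposal is correct and follows essentially the same route as the paper: differentiate the series termwise (the computation of each term matches), and justify the interchange by convergence of the original series at a point together with uniform convergence of the differentiated series on compact subsets of $(0,\infty)$, which is exactly the criterion the paper invokes. The only difference is cosmetic: you package the exponential factor as $F(\eta)=\exp(G(\eta))$ and use the product rule explicitly, while the paper bounds the tail of the differentiated series directly, but the underlying argument is the same.
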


\begin{proof}[Proof of Lemma~\ref{lemma_v(eta)_derivative}]
To prove the lemma it is sufficient to show that for any $\eta>0$
\bqny{
\frac{\partial}{\partial \eta}
\left(\sum_{k=1}^\infty \frac{\Psi(\sqrt{\frac{\eta k}{2}})}{k}\right) =
\sum_{k=1}^\infty
\frac{\partial}{\partial \eta}
\left(\frac{\Psi(\sqrt{\frac{\eta k}{2}})}{k}\right)
.}

Take $a,b>0$ such that $\eta\in [a,b]$,
$f(\eta) = \sum_{k=1}^\infty (\Psi(\sqrt{\eta k/2})/k)$
and $f_n(\eta) =
\sum_{k=1}^n (\Psi(\sqrt{\eta k/2})/k), n \in \N$.
According to paragraph 3.1 p. 385 in \citep{Matan2003} to claim the
line above it is enough to show that
\\
\\
1) there exists $\eta_0 \in[a,b]$ such that the sequence
$\{f_n(\eta_0)\}_{n\in \mathbb{N}}$ converges to a finite limit,\\
2) $f'_n(\eta), \eta\in [a,b]$ converge uniformly to some function.\\

The first condition holds
since $\Psi(x)<e^{-x^2/2}$ for $x>0$. For the
second condition we need to prove that uniformly
for all $\eta\in [a,b]$ it holds, that
$\sum_{k=n+1}^\infty f'_k(\eta) \to 0$ as $n \to \infty$.
We have
\bqny{
\sum_{k=n+1}^\infty f'_k(\eta) = \sum_{k=n+1}^\infty
\frac{\phi(\sqrt {\eta k/2})}{2\sqrt {2k\eta}} = \sum_{k=n+1}^\infty
\frac{e^{-\eta k/4}}{4\sqrt {\pi k\eta}}
\le \mathcal Ce^{-\mathcal C_1n} \to 0, \ \ n \to \infty
}
and the claim holds.
\end{proof}

Now we are ready to continue proof of Theorem~\ref{thm:main} for
$\alpha =1$.
By Proposition~\ref{prop:formulas_alpha12}(i), we find that
\bqny{
\mathcal A :=\lim_{\eta \to 0}\frac{\mathcal H_0-\mathcal H_\eta}
{\sqrt \eta} = \lim_{\eta \to 0}\frac{1-1/v(\eta)}{\sqrt \eta}.
}
Since $\mathcal H_\eta = v(\eta)^{-1} \to 1$ as $\eta \to 0$ (see, e.g.,
\citep{MR3217455}) we conclude that $\lim_{\eta\to 0} v(\eta)=1$
and hence $\mathcal A = \lim_{\eta \to 0}\frac{v(\eta)-1}{\sqrt \eta}.$
Implementing the L'H\^{o}pital's rule we obtain by Lemma
\ref{lemma_v(eta)_derivative}
\bqny{\mathcal A =
\lim_{\eta \to 0} \frac{v'(\eta)}{1/(2\sqrt \eta)}
= 2\lim_{\eta \to 0}
\sqrt\eta  \exp\left(
2\sum_{k=1}^\infty \frac{\Psi(\sqrt{\frac{\eta k}{2}})}{k}\right)
\left(1-\frac{\sqrt \eta}{2\sqrt \pi}
\sum_{k=1}^\infty \frac{e^{-\frac{\eta k}{4}}}{\sqrt k}\right).}
Note that by the definition of $v(\eta)$ observation that $
\lim_{\eta\to 0}v(\eta) = 1$
implies
$$\sqrt\eta  \exp
\left(2\sum_{k=1}^\infty \frac{\Psi(\sqrt{\frac{\eta k}{2}})}{k}\right)
\sim \frac{1}{\sqrt \eta}, \ \ \ \eta \to 0$$
and hence
\bqny{
\mathcal A = \lim_{\eta\to 0}\frac{2}{\sqrt \eta}
\Big(1-\frac{\sqrt \eta}{2\sqrt \pi}
\sum_{k=1}^\infty \frac{e^{-\frac{\eta k}{4}}}{\sqrt k}
\Big).
}
Let $x = \sqrt \eta/2$, thus
\bqny{
\mathcal A = \lim_{x\to 0}\frac{1}{x}
\Big(1-\frac{x}{\sqrt \pi}
\sum_{k=1}^\infty \frac{e^{-x^2k}}{\sqrt k}\Big)
\Big) = \lim_{x\to 0}\frac{1}{x}
\Big(1-\frac{x}{\sqrt \pi}\li(e^{-x^2})\Big),
}
where $\li$ is the polylogarithm function, see, e.g.,
\citep{MR2310128}.
As follows from \citep[Eq.~(9.3)]{Polylogarithm_Properties}
\bqny{
\lim_{x\to 0}\frac{1}{x}
\Big(1-\frac{x}{\sqrt \pi}\li(e^{-x^2})\Big) &=&
\lim_{x\to 0}\frac{1}{x}
\Big(1-\frac{x}{\sqrt \pi}\Big(\Gamma(1/2)(x^2)^{-1/2}+
\zeta(1/2)+
\sum_{k=1}^\infty\zeta(1/2-k)\frac{(-x^2)^k}{k!} \Big)\Big)
\\&=&
\frac{\zeta(1/2)}{\sqrt \pi}-\frac{1}{\sqrt \pi}\lim_{x\to 0}\Big(
\sum_{k=1}^\infty\zeta(1/2-k)\frac{x^{2k}(-1)^k}{k!}\Big)
.}
Thus, to prove the claim is it enough to show that
\bqn{\label{sum_zeta_functions}
\lim_{x\to 0}
\sum_{k=1}^\infty\zeta(1/2-k)\frac{x^{2k}(-1)^k}{k!} = 0.
}

By the Riemann functional equation (equation (2.3) in
\citep{MR3978987}) and
observation that $\zeta(s)$ is strictly decreasing for real $s>1$
we have for any
natural number $k$
\bqny{|\zeta(1/2-k)| \le
2^{1/2-k}\pi^{-1/2-k}\Gamma(1/2+k)\zeta(1/2+k)
\le
2^{-k}\Gamma(k+1)\zeta(3/2)
= \frac{\zeta(3/2)k!}{2^k}
.}
Thus, for $|x|<1$ we have
\bqny{
\Big|\sum_{k=1}^\infty\zeta(1/2-k)\frac{x^{2k}(-1)^k}{k!}\Big|
\le x^2 \sum_{k=1}^\infty \frac{|\zeta(1/2-k)|}{k!}
\le
x^2\zeta(3/2)\sum_{k=1}^\infty2^{-k} = \zeta(3/2)x^2
}
and  \eqref{sum_zeta_functions} follows,
this completes the proof of case $\alpha=1$.
\end{proof}

\begin{proof}[Proof of Theorem~\ref{thm:main}, $\alpha=2$]
By Proposition~\ref{prop:formulas_alpha12}(ii) we have, as $\delta\to 0$,
\bqny{
\mathcal H_2-\mathcal H_2^\delta &=&
\frac{1}{\sqrt\pi}-\frac{2}{\delta}\left(
\Phi(\frac{\delta}{\sqrt 2})-\frac{1}{2}\right) = \frac{1}{\sqrt \pi}\left(1- \frac{\sqrt 2}{\delta}
\int^{\delta/\sqrt 2}_0 e^{-x^2/2}{\rm d}x\right) \\
&=& \frac{1}{\sqrt \pi}\left(1- \frac{\sqrt 2}{\delta}
\int^{\delta/\sqrt 2}_0\left(1-\frac{x^2}{2}\right){\rm d}x+O(\delta^4)\right)
= \frac{1}{12\sqrt\pi}
}
and the claim follows.
\end{proof}

\subsection{Proofs of other Results}

\begin{proof}[Proof of Proposition~\ref{prop:formulas_alpha12}]
For (i) consult, e.g., \citep{MR3379923,MR3217426} and for (ii)
see \citep[Eq.~(2.9)]{MR3745388}.
\end{proof}

Let $\delta\geq0$. Define measure $\mu_\delta$ such that for real numbers $a\le b$
\bqny{
\mu_\delta([a,b]) =
\begin{cases}
\delta\cdot \#\{[a,b]_\delta \}, & \delta>0\\
b-a, & \delta = 0.
\end{cases}
}

\begin{proof}[Proof of Theorem~\ref{thm:tail_behavior_pickands}]
We have for any $T>0, x\ge 1$ and $\delta\ge 0$
\bqn{\label{p1(x)_and_p2(x)}
\pk{\xi_\alpha^\delta>x} &\le &
\notag
\pk{\frac {e^{\sup_{t\in \R}Z_\alpha(t)}}{\int_\R e^{Z_\alpha(t)}{\rm d}\mu_\delta}>x}
\\ &=&\notag
\pk{\frac{e^{\sup_{t\in [-T,T]}Z_\alpha(t)}}
{\int_\R e^{Z_\alpha(t)}{\rm d}\mu_\delta}
>x \text{ and } Z_\alpha(t) \text{ achieves its maximum at }  t \in [-T,T] }
\\ \notag & \ &  \ + \
\pk{
\frac{e^{\sup_{t\in \R\backslash [-T,T]}Z_\alpha(t)}}
{\int_\R e^{Z_\alpha(t)}{\rm d}\mu_\delta}
>x \text{ and } Z_\alpha(t)
\text{ achieves its maximum at }  t \in \R\backslash[-T,T] }
\\ \notag & \le &
\pk{
\frac{e^{\sup_{t\in [-T,T]}Z_\alpha(t)}}
{\int_\R e^{Z_\alpha(t)}{\rm d}\mu_\delta}
>x }
+
\pk{\exists t\in \R\backslash [-T,T] : Z_\alpha(t)>0}
\\ \notag &\le&
\pk{
\frac{e^{\sup_{t\in [-T,T]}Z_\alpha(t)}}
{\int_{[-T,T)} e^{Z_\alpha(t)}{\rm d}\mu_\delta}
>x }+
2\pk{\exists t\ge T : Z_\alpha(t)>0}
\\ & =:& p_1(T,x)+2p_2(T).
}

\emph{Estimation of $p_2(T)$. }
By the self-similarity of fBm we have
\bqny{\notag
p_2(T) &\le& \sum_{k=1}^\infty \pk{\exists t \in [kT,(k+1)T]: \sqrt 2
B_\alpha(t)-t^{\alpha}>0}
\\&=& \notag
\sum_{k=1}^\infty \pk{\exists t \in [1,1+\frac{1}{k}]: \sqrt 2
B_\alpha(t)(kT)^{\alpha/2}>(kT)^{\alpha}t^{\alpha}}
\\&\le& \notag
\sum_{k=1}^\infty \pk{\exists t \in [1,2]: \sqrt 2
B_\alpha(t)>(kT)^{\alpha/2}}
}
and thus using Borell-TIS inequality, we find that for all $T\ge 1$
\bqn{\label{estimate_p2(T)}
p_2(T) \le \sum_{k=1}^\infty \mathcal C e^{-\frac{(kT)^{\alpha}}{10}}\le \mathcal C e^{-\frac{T^{\alpha}}{10}}
.}

\emph{Estimation of $p_1(T,x)$.} Observe that for $T,x\ge 1$ and $\delta\in[0,1]$,
\bqny{
p_1(T,x) &\le& \pk{
\frac{ \sum_{k=-T}^{T-1}e^{\sup_{t\in [k,k+1]}Z_\alpha(t)}}
{\sum_{k=-T}^{T-1}\int_{[k,k+1)} e^{Z_\alpha(t)}{\rm d}\mu_\delta}
>x } =: \pk{\frac{\sum_{k=-T}^{T-1}a_k(\omega)}{
\sum_{k=-T}^{T-1}b_k(\omega)}>x}.
}
Since the event $\{\sum_{k=-T}^{T-1}a_k(\omega)/
\sum_{k=-T}^{T-1}b_k(\omega)>x\}$ implies $\{a_k(\omega)/b_k(\omega)>x, \text{ for some } k \in [-T,T-1]_1\}$, thus
\bqny{
\pk{\frac{\sum_{k=-T}^{T-1}a_k(\omega)}{
\sum_{k=-T}^{T-1}b_k(\omega)}>x} \le
\sum_{k=-T}^{T-1}
\pk{\frac{a_k(\omega)}{b_k(\omega)}>x}
\le 2T \sup_{k \in [-T,T]}
\pk{
\frac{\sup_{t\in [k,k+1]}e^{Z_\alpha(t)}}
{\int_{[k,k+1)} e^{Z_\alpha(t)}{\rm d}\mu_\delta}>x
}.}
Therefore, we obtain that for $x,T\ge 1$
\bqny{
p_1(T,x) \le 2T \sup_{k \in [-T,T]}
\pk{\frac{\sup_{t\in [k,k+1]}e^{Z_\alpha(t)}}
{\int_{[k,k+1)} e^{Z_\alpha(t)}{\rm d}\mu_\delta}>x}
.}
Next we have by the stationarity of the increments of fBm for
$x,T\ge 1$
\begin{align*}
& \pk{\frac{\sup_{t\in [k,k+1]}e^{Z_\alpha(t)}}
{\int_{[k,k+1)} e^{Z_\alpha(t)}{\rm d}\mu_\delta}>x
} \leq \pk{\frac{\sup_{t\in [k,k+1]}e^{Z_\alpha(t)}}
{\mu_\delta[k,k+1]\inf_{[k,k+1]} e^{Z_\alpha(t)}}>x} \\
& \qquad\qquad\leq \pk{\exists t,s \in [k,k+1]: Z_\alpha(t)-Z_\alpha(s)>\log(\tfrac{x}{2})} \\
& \qquad\qquad\leq \pk{\exists t,s \in [k,k+1]: B_\alpha(t)-B_\alpha(s)>
\frac{\log(\tfrac{x}{2}) - \sup_{t,s \in [k,k+1]}(|t|^{\alpha}-|s|^{\alpha}) }{\sqrt 2}} \\
& \qquad\qquad\leq \pk{\exists t \in [0,1]: B_\alpha(t)>
\frac{\log x -\mathcal C\max(1,T^{\alpha-1})}{\sqrt 2}},
\end{align*}
where in the second line we used that $\mu_\delta[k,k+1]\geq1/2$ for $\delta\in[0,1]$. Thus, for $T,x\ge 1$
\bqn{\label{bound_p1(m,x)}
p_1(T,x) &\le& 2T\pk{\exists t \in [0,1]: B_\alpha(t)>
\frac{\log x -\mathcal C\max(1,T^{\alpha-1})}{\sqrt 2}}
}
and combining the statement above with \eqref{p1(x)_and_p2(x)} and
\eqref{estimate_p2(T)}
we have for $x,T\ge1$
\bqny{
\pk{\frac{\sup_{t\in \R}e^{Z_\alpha(t)}}
{\int_\R e^{Z_\alpha(t)}{\rm d}\mu_\delta}
>x} \le \widetilde{\mathcal C} e^{-\frac{T^{\alpha}}{10}} +
2T\pk{\exists t \in [0,1]: B_\alpha(t)>
\frac{\log x -\mathcal C\max(1,T^{\alpha-1})}{\sqrt 2}}.
}

Assume that $\alpha\le 1$. Then choosing $T=x$ in the line above
we have by Borell-TIS inequality for all $x\ge 1$
\bqny{
\pk{\frac{\sup_{t\in \R}e^{Z_\alpha(t)}}
{\int_\R e^{Z_\alpha(t)}{\rm d}\mu_\delta}>x} \le  \mathcal C_1e^{-\mathcal C_2\log^2 x}.
}
Assume that $\alpha>1$.
Taking $T = \mathcal C'(\log x)^{\frac{1}{\alpha-1}}$ with sufficiently small $\mathcal C'>0$
we obtain for $x\ge 1$
\bqny{
\pk{\frac{\sup_{t\in \R}e^{Z_\alpha(t)}}
{\int_\R e^{Z_\alpha(t)}{\rm d}\mu_\delta}
>x} \le \widetilde{\mathcal C} e^{-\mathcal C''(\log x)^{\frac{\alpha}{\alpha-1}}
}+\mathcal C_1e^{-\mathcal C_2\log^2 x}\le \mathcal C_3e^{-\mathcal C\log^2 x}}
and the claim follows.
\end{proof}

\begin{proof}[Proof of Theorem~\ref{theorem_truncation}]
Observe that $|x^p-y^p| \le p|x-y|(x^{p-1}+y^{p-1})$ for all $x,y\ge 0$ and $p\ge 1$,
this can be shown straightforwardly by the differentiation.
Hence we have
\bqny{
\lefteqn{\left|\E{(\xi_\alpha^\delta(T))^p} - \E{(\xi_\alpha^\delta)^p}\right|
\le
\E{\left|(\xi_\alpha^\delta(T))^p - (\xi_\alpha^\delta)^p\right|}}\\
\\ &&\le
p\E{\left|\xi_\alpha^\delta(T)-\xi_\alpha^\delta
\right|\cdot\left((\xi_\alpha^\delta(T))^{p-1} + (\xi_\alpha^\delta)^{p-1}
\right)}
\\ &&\le
p\left(\E{\left(\xi_\alpha^\delta(T)-\xi_\alpha^\delta\right)^2}\right)^{1/2}
\cdot
\left(\E{\left((\xi_\alpha^\delta(T))^{p-1} + (\xi_\alpha^\delta)^{p-1}
\right)^2}\right)^{1/2}
\\ &&\le
\sqrt 2 p\left(\E{\left(\xi_\alpha^\delta(T)-\xi_\alpha^\delta\right)^2}\right)^{1/2}
\cdot
\left(\E{
(\xi_\alpha^\delta(T))^{2p-2} + (\xi_\alpha^\delta)^{2p-2}}\right)^{1/2}
\\ &&=:
\sqrt 2 p\left(\E{\beta^2}\right)^{1/2}\cdot \left(\E{\kappa_{p}}
\right)^{1/2}.
}

We have
\bqny{
\beta := \frac{\sup_{t\in\delta \Z}e^{Z_\alpha(t)} }
{\int_{\R}e^{Z_\alpha(t)}{\rm d}\mu_\delta}-
\frac{\sup_{t\in[-T,T]_\delta}e^{Z_\alpha(t)} }
{\int_{[-T,T]}e^{Z_\alpha(t)}{\rm d}\mu_\delta} = \beta_1 - \beta_2\beta_3,
}
where
\bqny{
\beta_1 = \frac{\sup\limits_{t\in\delta\Z}e^{Z_\alpha(t)}-\sup\limits_{t\in[-T,T]_\delta
}e^{Z_\alpha(t)}}{\int_{\R}e^{Z_\alpha(t)}{\rm d}\mu_\delta}\ge 0, \quad \beta_2 = \frac{
\int_{\R\backslash[-T,T]}e^{Z_\alpha(t)}{\rm d}\mu_\delta
}{\int_{\R}e^{Z_\alpha(t)}{\rm d}\mu_\delta}>0, \quad \beta_3 = \frac{\sup\limits_{t\in[-T,T]_\delta}e^{Z_\alpha(t)}}{\int_{[-T,T]}e^{Z_\alpha(t)}{\rm d}\mu_\delta}>0.
}
After applying H\"{o}lder inequality we obtain
\bqn{\label{proof_lemma_truncation_holder}
\E{\beta^2}\le 2\E{\beta_1^2}+2\sqrt{\E{\beta_2^4}\E{\beta_3^4}}.
}
We have by \eqref{bound_p1(m,x)} that for $x,T\ge 1$
\bqny{
\pk{\beta_3>x} \le 2T\pk{\exists t \in [0,1]: B_\alpha(t)>
\frac{\log x -\mathcal C\max(1,T^{\alpha-1})}{\sqrt 2}}}
implying that for $T\ge 1$
\bqn{
\notag\E{\beta_3^4} &=& \int_{0}^\IF\pk{\beta_2>x^{1/4}}{\rm d}x
\\&\le& \label{int_estim}
2T \int_{0}^\IF\pk{\exists t \in [0,1]: B_\alpha(t)>
\frac{\frac{1}{4}\log x -\mathcal C\max(1,T^{\alpha-1})}{\sqrt 2}}{\rm d}x
\\&\le&\notag 2T\left( \int_0^{\exp(5\mathcal C\max(1,T^{\alpha-1}))}1{\rm d}x+
\int_{\exp(5\mathcal C\max(1,T^{\alpha-1}))}^\IF
\pk{\exists t \in [0,1]: B_\alpha(t)>
\mathcal C_3\log x }{\rm d}x\right)
\\&\le& \notag
\mathcal C_1e^{\mathcal C\max(1,T^{\alpha-1})}.
}
Finally for $\alpha\in (0,2)$ and $T\ge 1$ we have
\bqn{\label{proof_lemma_truncation_beta3_up_bound}
\E{\beta_3^4} \le \mathcal C_1e^{\mathcal C\max(1,T^{\alpha-1})}.
}
Next, we focus on properties of $\beta_2$. We have for $k>0$
and sufficiently large $T$
\bqny{
\pk{\int_{[kT,(k+1)T)} e^{Z_\alpha(t)}{\rm d}\mu_\delta >
e^{-\frac{1}{2}T^\alpha k^\alpha}}
&\le&
\pk{(T+1) \sup_{t\in[kT,(k+1)T]}e^{Z_\alpha(t)} >
e^{-\frac{1}{2}T^\alpha k^\alpha}}
\\&=&
\pk{\log (T+1)+ \sup_{t\in[kT,(k+1)T]} Z_\alpha(t) >
 -\frac{1}{2}T^\alpha k^\alpha}
\\&=&
\pk{ \exists t\in[kT,(k+1)T] : \frac{B_\alpha(t)}{t^{\alpha/2}}
 > \frac{t^{\alpha}-\frac{1}{2}T^\alpha k^\alpha-\log (T+1)}{\sqrt 2 t^{\alpha/2}}}
\\&\le&
\pk{ \exists t\in[kT,(k+1)T] : \frac{B_\alpha(t)}{t^{\alpha/2}}
 > (Tk)^{\alpha/2}/3},
}
which, according to Borell-TIS inequality, is upper-bounded by $e^{-k^{\alpha}T^{\alpha}/19}$. By the lines above we obtain that with probability at least
$1-\sum_{k\in \Z\backslash\{0\}}e^{-|k|^{\alpha}T^{\alpha}/19}
\ge 1-e^{-T^{\alpha}/20}$ for large $T$
\bqny{
\int_{\R\backslash[-T,T]}e^{Z_\alpha(t)}{\rm d}\mu_\delta \le
\sum_{k\in \Z\backslash\{0\}}
e^{-\frac{1}{2}T^\alpha |k|^\alpha} \le
e^{-T^\alpha/3}.
}
Combining everything together we obtain that for sufficiently large $T$
\bqn{\label{111}
\pk{\int_{\R\backslash[-T,T]}e^{Z_\alpha(t)}{\rm d}\mu_\delta > e^{-T^\alpha/3} }
\le e^{-T^\alpha/20}.
}
Next we notice that for $T\geq1$
\bqny{
\pk{\int_{[-T,T]} e^{Z_\alpha(t)}{\rm d}\mu_\delta<e^{-\frac{T^\alpha}{4}}} \le
\pk{\int_{[0,1]} e^{Z_\alpha(t)}{\rm d}t\mu_\delta<e^{-\frac{T^\alpha}{4}}} \le
\pk{\sup_{t\in[0,1]} Z_\alpha(t)<- \frac{T^{\alpha}}{4}}
}
so by Borell-TIS inequality, the above is bounded by $e^{-\frac{T^{2\alpha}}{65}}$ for all sufficiently large $T$. This result, in combination with \eqref{111} gives us
$\pk{\beta_2>e^{-T^\alpha/12 }}\le e^{-T^\alpha/21}$
for all sufficiently large $T$.
Thus, since $\beta_2\in [0,1]$ from the line above we immediately obtain that
\bqn{\label{proof_lemma_truncation_beta2_up_bound}
\E{\beta^4_2}\le \mathcal C_1e^{-\mathcal CT^\alpha}.
}
for $T\ge 1$. By \eqref{estimate_p2(T)} we observe, that
\bqny{
\pk{\beta_1>0} \le \pk{\exists t \notin [-T,T]: Z_\alpha(t)>0}
\le 2p_2(T)\le e^{-\mathcal CT^\alpha}.
}
Next by Theorem~\ref{thm:tail_behavior_pickands} we have
for $x\ge 1$ that
\bqny{
\pk{\beta_1>x} \le \pk{
\frac{\sup_{t\in\delta\Z}e^{Z_\alpha(t)}}{\int_{\R}e^{Z_\alpha(t)}{\rm d}\mu_\delta}>x}
\le \mathcal C_1e^{-\mathcal C_2\log^2x}
}
and thus $\E{\beta_1^4}<\mathcal C$ with positive constant $\mathcal C$ that does not depend on $T$. With $A_T:= \{\beta_1(\omega)>0\}$, by H\"{o}lder inequality for large $T$ we have
\bqny{
 \E{\beta_1^2}
= \E{\beta_1^2 \cdot\ind(\Omega_T) }
\le \sqrt{\E{\beta_1^4}}\sqrt{\E{\ind(\Omega_T)}}
\le \mathcal C_1e^{-\mathcal C_2T^{\alpha}}.
}
By the line above and \eqref{proof_lemma_truncation_beta3_up_bound},
\eqref{proof_lemma_truncation_beta2_up_bound} and
\eqref{proof_lemma_truncation_holder}
we obtain for $T\ge 1$
\bqn{\label{proof_lemma_truncation_beta_up_bound}
\E{\beta^2}\le \mathcal C_2e^{-\mathcal C_1T^\alpha}.
}
Our next aim is estimation of $\E{\kappa_{p}}$.
We have for $T\ge 1$ that
\bqny{
\E{(\xi_\alpha^\delta(T))^{2p-2}} &=& \int_0^\IF
\pk{\xi_\alpha^\delta(T)>x^{\frac{1}{2p-2}}}{\rm d}x =\int_0^\IF
\pk{p_1(T,x^{\frac{1}{2p-2}})}{\rm d}x\\
&\le &
\int_{0}^\IF\pk{\exists t \in [0,1]: B_\alpha(t)>
\frac{\log x^{\frac{1}{2p-2}} -\mathcal C\max(1,T^{\alpha-1})}{\sqrt 2}}{\rm d}x.
}
By the same arguments, as in Eq.~\eqref{int_estim}, the last integral above does not
exceed $\mathcal C_1e^{\max \mathcal C_2(T^{\alpha-1},1)}$ and since by Theorem~\ref{thm:tail_behavior_pickands}
$\xi_\alpha^\delta$ has all finite moments uniformly bounded for
all $\delta\ge 0$ we obtain that for $T\ge 1$
$$\kappa_{p} \le \mathcal C_1e^{\max \mathcal C_2(T^{\alpha-1},1)}.$$
Combining the bound above with \eqref{proof_lemma_truncation_beta_up_bound}
we have $\sqrt 2 p\E{\beta^2}\E{\kappa_{p}} \le e^{-\mathcal C_pT^\alpha}$
for sufficiently large $T$ and the claim follows.
\end{proof}


\begin{proof}[Proof of Corollary \ref{corollary_decreasing_Pickands_constant}]
\emph{Case $\alpha=1$.}
First we show that
 $v(\eta)$ is an increasing function for
 $\eta> 0$, that is equivalent with fact that $v'(\eta)>0$ for $\eta>0$.
In the light of Lemma~\ref{lemma_v(eta)_derivative}
it is sufficient to show
\bqny{
\frac{\sqrt \eta}{2\sqrt \pi}
\sum_{k=1}^\infty \frac{e^{-\frac{\eta k}{4}}}{\sqrt k}<1,\quad  \eta>0.
}
We have
\bqny{
\frac{\sqrt\eta}{2\sqrt \pi}
\sum_{k=1}^\infty \frac{e^{-\frac{\eta k}{4}}}{\sqrt k}
 < \frac{1}{\sqrt \pi}
 \sqrt\frac{\eta}{4}\int_0^\infty  e^{-\frac{\eta z}{4}}z^{-1/2}dz =
\frac{1}{\sqrt \pi}\int_0^\infty  e^{-\frac{\eta z}{4}}
(\frac{\eta z}{4})^{-1/2}d(\frac{\eta z}{4}) =\frac{1}{\sqrt \pi}
\Gamma(1/2) = 1
}
and hence $\mathcal{H}^{\eta}_1 =1/v(\eta)$ is decreasing for $\eta>0$.
Since by the classical
definition $\mathcal{H}_{1}^0>\mathcal{H}_1^{\eta}$ for any $\eta>0$ we obtain
the claim.\newline

\emph{Case $\alpha =2$.} We have by Proposition
\ref{prop:formulas_alpha12}(ii)
\bqny{
\mathcal{H}_2^\delta =
 \frac{2}{\delta}\left( \Phi(\delta/\sqrt{2})-\frac{1}{2}   \right)
 = \frac{2}{\delta\sqrt{2\pi}}\int_{0}^{\delta/\sqrt{2}}
 e^{-x^2/2}{\rm d}x =
 \frac{1}{\eta\sqrt{\pi}}\int_{0}^{\eta}
 e^{-x^2/2}{\rm d}x,
}
where $\eta = \delta/\sqrt 2$. The derivative of the last integral
above with respect to $\eta$ equals
\bqny{
 \frac{1}{\sqrt{\pi}}\left(
-\frac{1}{\eta^2}\int_{0}^{\eta} e^{-x^2/2}{\rm d}x
+\frac{1}{\eta}e^{-\eta^2/2}
\right) =
\frac{1}{\sqrt{\pi}\eta^2}\left(
\int_{0}^{\eta} (e^{-\eta^2/2}-e^{-x^2/2}){\rm d}x
\right)<0
}
and the claim follows.
\end{proof}

\section*{Acknowledgements}
We would like to thank Prof. Enkelejd Hashorva and Prof.
Krzysztof D\c{e}bicki for fruitful discussions. We are also greatful to the anonymous reviewer for careful reading and suggesting an improvement of our upper bound in Theorem~\ref{thm:main}(ii).\\
Krzysztof Bisewski's research was funded by SNSF Grant 200021-196888.\\
Grigori Jasnovidov was supported by Ministry of Science and Higher Education of the Russian Federation grant 075-15-2022-289.
\newline

{\bf Data Availability Statement:} The article shares no data.

{\bf Conflict of interest statement:}
The authors declare that they do not have any conflicts of interests.

\end{document}